\newcommand{\vect}[1]{\ensuremath{\mathbf{#1}}} 
\newcommand{\card}[1]{\ensuremath{\lvert{#1}\rvert}} 
\newcommand{\minor}[3]{\ensuremath{{#1}_{{#2} \gets {#3}}}} 
\DeclareMathOperator{\ess}{ess} 
\DeclareMathOperator{\Ess}{Ess} 
\DeclareMathOperator{\gap}{gap} 
\DeclareMathOperator{\qa}{qa} 
\DeclareMathOperator{\GF}{GF} 
\newcommand{\oddsupp}{\ensuremath{\mathrm{oddsupp}}} 
\newcommand{\Aneq}[1][n]{\ensuremath{A^{#1}_{=}}}
\theoremstyle{plain}
\newtheorem{theorem}{Theorem}[section]
\newtheorem{proposition}[theorem]{Proposition}
\newtheorem{lemma}[theorem]{Lemma}
\newtheorem{corollary}[theorem]{Corollary}
\newtheorem{fact}[theorem]{Fact}
\theoremstyle{definition}
\newtheorem{example}[theorem]{Example}
\theoremstyle{remark}
\newtheorem{remark}[theorem]{Remark}
\begin{document}
\title{On the arity gap of polynomial functions}
\author{Miguel Couceiro}
\address[M. Couceiro]{Mathematics Research Unit \\
University of Luxembourg \\
6, rue Richard Coudenhove-Kalergi \\
L-1359 Luxembourg \\
Luxembourg}
\email{miguel.couceiro@uni.lu}

\author{Erkko Lehtonen}
\address[E. Lehtonen]{Computer Science and Communications Research Unit \\
University of Luxembourg \\
6, rue Richard Coudenhove-Kalergi \\
L-1359 Luxembourg \\
Luxembourg}
\email{erkko.lehtonen@uni.lu}

\author{Tam\'as Waldhauser}
\address[T. Waldhauser]{Mathematics Research Unit \\
University of Luxembourg \\
6, rue Richard Coudenhove-Kalergi \\
L-1359 Luxembourg \\
Luxembourg
\and
Bolyai Institute \\
University of Szeged \\
Aradi v\'{e}rtan\'{u}k tere 1 \\
H-6720 Szeged \\
Hungary}
\email{twaldha@math.u-szeged.hu}

\begin{abstract}
The authors' previous results on the arity gap of functions of several variables are refined by considering polynomial functions over arbitrary fields. We explicitly describe the polynomial functions with arity gap at least $3$, as well as the polynomial functions with arity gap equal to $2$ for fields of characteristic $0$ or $2$. These descriptions are given in the form of decomposition schemes of polynomial functions.
Similar descriptions are given for arbitrary finite fields. However, we show that these descriptions do not extend to infinite fields of odd characteristic.
\end{abstract}
\maketitle

\section{Introduction and preliminaries}
\label{sect prel}

Throughout this section, let $A$ and $B$ be arbitrary sets with at least two elements. A \emph{partial function of several variables} from $A$ to $B$ is a mapping $f \colon S \to B$, where $S \subseteq A^n$ for some integer $n \geq 1$, called the \emph{arity} of $f$. If $S = A^n$, then we speak of (\emph{total}) \emph{functions of several variables.} Functions of several variables from $A$ to $A$ are referred to as \emph{operations} on $A$.

For an integer $n \geq 1$, let $[n] := \{1, \dots, n\}$.
Let $f \colon S \to B$ ($S \subseteq A^n$) be an $n$-ary partial function and let $i \in [n]$.
We say that the $i$-th variable is \emph{essential} in $f$ (or $f$ \emph{depends} on $x_i$), if there exist tuples
\[
(a_1, \dots, a_{i-1}, a_i, a_{i+1}, \dots, a_n), (a_1, \dots, a_{i-1}, a'_i, a_{i+1}, \dots, a_n) \in S
\]
such that
\[
f(a_1, \dots, a_{i-1}, a_i, a_{i+1}, \dots, a_n) \neq f(a_1, \dots, a_{i-1}, a'_i, a_{i+1}, \dots, a_n).
\]
Variables that are not essential are called \emph{inessential.}
Let $\Ess f := \{i \in [n] : \text{$x_i$ is essential in $f$}\}$. The cardinality of $\Ess f$ is called the \emph{essential arity} of $f$ and denoted by $\ess f$.

Let $f \colon A^n \to B$, $g \colon A^m \to B$. We say that $g$ is a \emph{simple minor} of $f$, if there is a map $\sigma \colon [n] \to [m]$ such that $g(x_1, \dots, x_m) = f(x_{\sigma(1)}, \dots, x_{\sigma(n)})$. We say that $f$ and $g$ are \emph{equivalent} if each one is a simple minor of the other.

For $i, j \in [n]$, $i \neq j$, define the \emph{identification minor} of $f \colon A^n \to B$ obtained by identifying the $i$-th and the $j$-th variable, as the simple minor $\minor{f}{i}{j} \colon A^n \to B$ of $f$ corresponding to the map $\sigma \colon [n] \to [n]$, $i \mapsto j$, $\ell \mapsto \ell$ for $\ell \neq i$, i.e., $\minor{f}{i}{j}$ is given by the rule
\[
\minor{f}{i}{j}(x_1, \dots, x_n) := f(x_1, \dots, x_{i-1}, x_j, x_{i+1}, \dots, x_n).
\]

\begin{remark}
Loosely speaking, a function $g$ is a simple minor of $f$, if $g$ can be obtained from $f$ by permutation of variables, addition of inessential variables and identification of variables. Similarly, two functions are equivalent, if each one can be obtained from the other by permutation of variables and addition or deletion of inessential variables.
\end{remark}

The \emph{arity gap} of $f$ is defined as
\[
\gap f := \min_{\substack{i,j \in \Ess f \\ i \neq j}} (\ess f - \ess \minor{f}{i}{j}).
\]

\begin{remark}
\label{rem:essgap}
Note that the definition of arity gap refers only to essential variables. Hence, in order to determine the arity gap of a function $f$, we may consider, instead of $f$, an equivalent function $f'$ that is obtained from $f$ by removing its inessential variables. It is easy to see that in this case $\gap f = \gap f'$. Therefore, whenever we consider the arity gap of a function $f$, we may assume without loss of generality that $f$ depends on all of its variables.
\end{remark}

\begin{example}
\label{ex:field}
Let $F$ be an arbitrary field. Consider the polynomial function $f \colon F^3 \to F$ induced by $x_1 x_3 - x_2 x_3$. It is clear that all variables of $f$ are essential, i.e., $\ess f = 3$. Let us form the various identification minors of $f$:
\begin{align*}
& \minor{f}{1}{2}(x_1, x_2, x_3) = 0, &
& \minor{f}{2}{1}(x_1, x_2, x_3) = 0, \\
& \minor{f}{1}{3}(x_1, x_2, x_3) = x_3^2 - x_2 x_3, &
& \minor{f}{3}{1}(x_1, x_2, x_3) = x_1^2 - x_1 x_2, \\
& \minor{f}{2}{3}(x_1, x_2, x_3) = x_1 x_3 - x_3^2, &
& \minor{f}{3}{2}(x_1, x_2, x_3) = x_1 x_2 - x_2^2.
\end{align*}
The essential arities of the identification minors are
\begin{align*}
& \ess \minor{f}{1}{2} = \ess \minor{f}{2}{1} = 0, \\
& \ess \minor{f}{1}{3} = \ess \minor{f}{3}{1} = \ess \minor{f}{2}{3} = \ess \minor{f}{3}{2} = 2.
\end{align*}
We conclude that $\gap f = 1$.
\end{example}

\begin{example}
\label{ex:linearBf}
Let $f \colon \{0,1\}^n \to \{0,1\}$ ($n \geq 2$) be the function induced by the polynomial $x_1 + x_2 + \dots + x_n + c$ ($c \in \{0,1\}$) over the two-element field. Then for each $i \neq j$ we have that $\minor{f}{i}{j}$ is induced by the polynomial
\[
\Bigl( \sum_{\ell \in [n] \setminus \{i, j\}} x_\ell \Bigr) + c.
\]
Thus $\ess f = n$ and $\ess \minor{f}{i}{j} = n - 2$ for all $i \neq j$; hence $\gap f = 2$.
It was shown by Salomaa~\cite{Salomaa} that every operation on $\{0,1\}$ has arity gap at most $2$.
The operations on $\{0,1\}$ were classified according to their arity gap in~\cite{CL2007}, where it was shown that for $n \geq 4$, the linear functions mentioned above are the only operations on $\{0,1\}$ that have essential arity $n$ and arity gap equal to $2$.
\end{example}

\begin{example}
Let $A$ be a finite set with $k \geq 2$ elements, say, $A = \{0, 1, \dots, k-1\}$. Let $f \colon A^n \to A$, $2 \leq n \leq k$, be given by the rule
\[
f(a_1, \dots, a_n) :=
\begin{cases}
1 & \text{if $(a_1, \dots, a_n) = (0, 1, \dots, n-1)$,} \\
0 & \text{otherwise.}
\end{cases}
\]
It is easy to see that all variables of $f$ are essential, and for all $i \neq j$, the function $\minor{f}{i}{j}$ is identically $0$. Hence $\gap f = n$. This example illustrates the fact that there exist functions of arbitrarily high arity gap.
\end{example}

The notion of arity gap has been studied by several authors~\cite{CL2007,CL,CLISMVL2010,GapA,GapB,oddsupp,Salomaa,SK,Willard}. In~\cite{CL}, a general classification of functions according to their arity gap was established. In order to state this result, we need to recall a few notions.

For $n \geq 2$, define
\[
\Aneq := \{(a_1, \dots, a_n) \in A^n : \text{$a_i = a_j$ for some $i \neq j$}\}.
\]
Furthermore, define $\Aneq[1] := A$.
Let $f \colon A^n \to B$. Any function $g \colon A^n \to B$ satisfying $f|_{A^n_=} = g|_{A^n_=}$ is called a \emph{support} of $f$. The \emph{quasi-arity} of $f$, denoted $\qa f$, is defined as the minimum of the essential arities of all supports of $f$, i.e., $\qa f := \min_g \ess g$ where $g$ ranges over the set of all supports of $f$. If $\qa f = m$, then we say that $f$ is \emph{quasi-$m$-ary.} Note that if $A$ is finite and $n > \card{A}$, then $A^n_= = A^n$; hence in this case $\qa f = \ess f$. Moreover, for an arbitrary $A$ and $n \neq 2$, we have $\qa f = \ess f|_{A^n_=}$ (see Lemma~4 in~\cite{CL}). The case $n = 2$ is excluded, because if $f \colon A^2 \to B$ is a function such that $f(a,a) \neq f(b,b)$ for some $a, b \in A$, then $\qa f = 1$ yet $\ess f|_{A^n_=} = 0$.

Denote by  $\mathcal{P}(A)$ the power set of $A$, and define the function $\oddsupp \colon \bigcup_{n \geq 1} A^n \to \mathcal{P}(A)$ by
\[
\oddsupp(a_1, \dots, a_n) :=
\{a \in A : \text{$\card{\{j \in [n] : a_j = a\}}$ is odd}\}.
\]
We say that a partial function $f \colon S \to B$ ($S \subseteq A^n$) is \emph{determined by $\oddsupp$} if there exists a function $f^* \colon \mathcal{P}(A) \to B$ such that
\begin{equation}
f = f^* \circ {\oddsupp}|_S.
\label{eq:fstar}
\end{equation}
Observe that only the restriction of $f^*$ to the set
\[
\mathcal{P}'_n(A) := \bigl\{ T \in \mathcal{P}(A) : \card{T} \in \{n, n-2, n-4, \dots\} \bigr\},
\]
is relevant in determining the values of $f$ in~\eqref{eq:fstar}. Moreover, the functions $f \colon A^n \to B$ determined by $\oddsupp$ are in one-to-one correspondence with the functions $f^* \colon \mathcal{P}'_n(A) \to B$.

The notion of a function being determined by $\oddsupp$ is due to Berman and Kisielewicz~\cite{BK}. Willard showed in~\cite{Willard} that if $f \colon A^n \to B$, where $A$ is finite, $\ess f = n > \max(\card{A}, 3)$ and $\gap f \geq 2$, then $f$ is determined by $\oddsupp$. The following fact is easy to verify.

\begin{fact}
\label{fact oddsupp}
A function $f \colon A^n \to B$ is determined by $\oddsupp$ if and only if $f$ is totally symmetric and $\minor{f}{2}{1}$ does not depend on $x_1$.
Similarly, $f|_{\Aneq}$ is determined by $\oddsupp$ if and only if $f|_{\Aneq}$ is totally symmetric and $\minor{f}{2}{1}$ does not depend on $x_1$.
\end{fact}

We can now state the general classification of functions according to the arity gap. This result was first obtained in~\cite{CL} for functions with finite domains, and in~\cite{GapA} it was shown to still hold for functions with arbitrary, possibly infinite domains.

\begin{theorem}
\label{thm:gap}
Let $A$ and $B$ be arbitrary sets with at least two elements.
Suppose that $f \colon A^n \to B$, $n \geq 2$, depends on all of its variables.

\begin{enumerate}[\rm (i)]
\item \label{thmitem1}
For\/ $3 \leq p \leq n$, $\gap f = p$ if and only if\/ $\qa f = n - p$.

\item \label{thmitem2}
For $n \neq3$, $\gap f = 2$ if and only if
\begin{itemize}
\item $\qa f = n - 2$ or
\item $\qa f = n$ and $f|_{\Aneq}$ is determined by\/ $\oddsupp$.
\end{itemize}

\item \label{thmitem3}
For $n = 3$, $\gap f = 2$ if and only if there is a nonconstant unary function $h \colon A \to B$ and $i_1, i_2, i_3 \in \{0, 1\}$ such that
\begin{align*}
f(x_1, x_0, x_0)  &  = h(x_{i_1}), \\
f(x_0, x_1, x_0)  &  = h(x_{i_2}), \\
f(x_0, x_0, x_1)  &  = h(x_{i_3}).
\end{align*}

\item
Otherwise\/ $\gap f = 1$.
\end{enumerate}
\end{theorem}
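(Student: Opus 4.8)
The plan is to reduce everything to the single parameter $\qa f$, using the key observation that identification minors see $f$ only on $\Aneq$. Concretely, $\minor{f}{i}{j}(\mathbf{a}) = f(a_1,\dots,a_{i-1},a_j,a_{i+1},\dots,a_n)$ evaluates $f$ at a tuple whose $i$-th and $j$-th entries coincide, hence at a point of $\Aneq$. So if $g$ is \emph{any} support of $f$, then $\minor{f}{i}{j} = \minor{g}{i}{j}$ for all $i \neq j$. Since a simple minor (in particular an identification minor) never has more essential variables than the function it comes from, choosing a support $g$ with $\ess g = \qa f$ gives $\ess\minor{f}{i}{j} = \ess\minor{g}{i}{j} \le \ess g = \qa f$ for every pair. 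As $f$ depends on all its variables, $\Ess f = [n]$, so $\gap f = n - \max_{i \neq j}\ess\minor{f}{i}{j} \ge n - \qa f$. This is the ``easy'' inequality and it holds for all $n \ge 2$.

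For the matching upper bound I would exploit the inessential variables of a minimal support $g$. If $x_i$ is inessential in $g$, then for any $j \neq i$ we have $\minor{g}{i}{j} = g$ (substituting into the ignored $i$-th argument changes nothing), hence $\ess\minor{f}{i}{j} = \ess g = \qa f$ and $\gap f \le n - \qa f$. A minimal support is $n$-ary with $\ess g = \qa f$, so it has exactly $n - \qa f$ inessential variables, and such an $i$ exists precisely when $\qa f \le n-1$. Combining the two bounds yields the clean formula $\gap f = n - \qa f$ \emph{whenever} $\qa f \le n-1$. This settles the forward direction of \eqref{thmitem1}, the $\qa f = n-2$ alternative of \eqref{thmitem2}, and the $\qa f = n-1$ contribution to the final clause; the converse in \eqref{thmitem1} then follows once we know that $\qa f = n$ forces $\gap f \le 2$, since a gap of $p \ge 3$ cannot arise from $\qa f \in \{n-1,n\}$.

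The remaining and genuinely harder case is $\qa f = n$, where no support has an inessential variable, the formula breaks down, and one must separately prove $\gap f \le 2$ and characterize equality. The easy direction is the $\oddsupp$ implication: if $f|_{\Aneq}$ is determined by $\oddsupp$, then in $\minor{f}{i}{j}$ the value $x_j$ occupies both positions $i$ and $j$, so these two equal occurrences cancel in parity and $\oddsupp$ of the full argument equals $\oddsupp$ of the remaining $n-2$ coordinates; thus $\minor{f}{i}{j}$ factors through those coordinates, $\ess\minor{f}{i}{j} \le n-2$ for every pair, and $\gap f \ge 2$. The hard part, and the main obstacle, is the converse together with the bound $\gap f \le 2$: assuming $\qa f = n$ and $\gap f = 2$, one must show via Fact~\ref{fact oddsupp} that $f|_{\Aneq}$ is totally symmetric with $\minor{f}{2}{1}$ independent of $x_1$. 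The plan is to analyze, for each pair $i \neq j$, which \emph{second} essential variable $\minor{f}{i}{j}$ is forced to lose, and to show that these simultaneous constraints can only be met by the $\oddsupp$ pattern. For finite $A$ with $n > \max(\card{A},3)$ this is exactly Willard's theorem~\cite{Willard} quoted above; the real work (following \cite{CL,GapA}) is to remove the cardinality restriction and to cover infinite domains, which is done by arguing directly with supports rather than with $\oddsupp$ counting.

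Finally, $n = 3$ must be handled apart in \eqref{thmitem3}, because the $\oddsupp$ description degenerates: for $n = 3$ every tuple of $\Aneq[3]$ has an odd-support of size $1$, so total symmetry of $f|_{\Aneq[3]}$ no longer captures $\gap f = 2$, and the more permissive condition involving $i_1,i_2,i_3 \in \{0,1\}$ is needed. Here I would unwind $\gap f = 2$, $\qa f = 3$ directly on the three diagonal slices $f(x_1,x_0,x_0)$, $f(x_0,x_1,x_0)$, $f(x_0,x_0,x_1)$: requiring each identification minor to drop two essential variables forces every slice to be a single nonconstant unary function $h$ applied to $x_0$ or $x_1$, which is precisely the stated condition. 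I expect the careful bookkeeping of these slices, and verifying sufficiency as well as necessity, to be the only delicate point in this subcase; everything else reduces to the support argument of the first two paragraphs.
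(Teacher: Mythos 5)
First, a point of reference: the paper itself contains no proof of Theorem~\ref{thm:gap}. It is imported verbatim from \cite{CL} (finite domains) and \cite{GapA} (arbitrary domains), so the paper's ``own proof'' is that citation. Measured against this, the parts of your argument that are actually carried out are correct and are exactly the standard support-based reductions used in \cite{GapA} (compare Theorem~\ref{thm:sum}): the observation that $\minor{f}{i}{j} = \minor{g}{i}{j}$ for \emph{every} support $g$ of $f$ (identification minors only read $f$ on $\Aneq$), hence $\gap f \ge n - \qa f$; the equality $\gap f = n - \qa f$ whenever $\qa f \le n-1$, obtained from an inessential variable of a minimal support; and the parity-cancellation argument giving $\ess \minor{f}{i}{j} \le n-2$ for all $i \ne j$ when $f|_{\Aneq}$ is determined by $\oddsupp$.

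Nevertheless, as a proof your proposal has a genuine gap, and it sits exactly where the content of the theorem lies, namely the case $\qa f = n$. Missing are: (a) that $\qa f = n$ forces $\gap f \le 2$; (b) that for $n > 3$, $\qa f = n$ together with $\gap f = 2$ forces $f|_{\Aneq}$ to be determined by $\oddsupp$; and (c) both directions of the $n=3$ case, which is genuinely different --- for instance, on $A = \{0,1,2\}$ the function returning $h$ of the repeated value of a tuple in $\Aneq[3]$ (your pattern $(i_1,i_2,i_3)=(0,0,0)$, extended arbitrarily off $\Aneq[3]$) has $\qa f = 3$ and $\gap f = 2$ yet is \emph{not} determined by $\oddsupp$, so no parity argument can cover it. For (a)--(c) you offer only a plan (``analyze which second essential variable $\minor{f}{i}{j}$ is forced to lose'') plus a pointer to \cite{Willard,CL,GapA}; that plan restates the goal rather than giving a method, and carrying it out is precisely the \'{S}wierczkowski/Willard-type combinatorial analysis that constitutes those papers. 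Note also that your closing of the converse in item~(i) is explicitly conditional on (a), so not even item~(i) is complete on its own. In short: what you prove is right and matches the reduction the cited papers perform; what you defer is the theorem itself --- though, to be fair, the paper under review defers exactly the same thing.
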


Theorem~\ref{thm:gap} can be refined to obtain more explicit classifications by assuming certain structures on the domain $A$ or the codomain $B$ of $f$. Examples of such refinements include the complete classification of Boolean functions~\cite{CL2007}, pseudo-Boolean functions~\cite{CL}, lattice polynomial functions~\cite{CLISMVL2010}, or more generally, order-preserving functions~\cite{GapB}. Moreover, in~\cite{GapA}, $B$ was assumed to be a group, and the following decomposition scheme based on the quasi-arity was obtained.

\begin{theorem}
\label{thm:sum}
Assume that $(B; +)$ is a group with neutral element\/ $0$. Let $f \colon A^n \to B$, $n \geq 3$, and\/ $1 \leq p \leq n$. Then the following two conditions are equivalent:
\begin{enumerate}[\rm (i)]
\item \label{itemlemmasum1}
$\ess f = n$ and\/ $\qa f = n - p$.

\item \label{itemlemmasum2}
There exist functions $g, h \colon A^n \to B$ such that $f = g + h$, $h|_{\Aneq} \equiv 0$, $h \not\equiv 0$, and\/ $\ess g = n - p$.
\end{enumerate}
The decomposition $f = g + h$ given above is unique.
\end{theorem}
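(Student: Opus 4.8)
The plan is to treat the two implications and the uniqueness claim separately, with essentially all the work concentrated in two elementary lemmas about the ``off--diagonal'' summand $h$, together with the identity $\qa f = \ess(f|_{\Aneq})$ recorded in the preliminaries (which applies because $n \geq 3$). Throughout I write $h := -g + f$ so that $g + h = f$; this is meaningful even if $(B;+)$ is not abelian, and every step below uses only the group axioms.

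For the implication (i) $\Rightarrow$ (ii), since $\qa f = n-p$ is attained, I would fix a support $g$ with $\ess g = n - p$ and set $h := -g + f$. As $g$ agrees with $f$ on $\Aneq$, we get $h|_{\Aneq} \equiv 0$; and if $h \equiv 0$ then $f = g$ would give $n = \ess f = n - p$, contradicting $p \geq 1$, so $h \not\equiv 0$. This step is pure bookkeeping.

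The substance is in (ii) $\Rightarrow$ (i), which I would reduce to two lemmas. Lemma (a): if $h \not\equiv 0$ and $h|_{\Aneq} \equiv 0$ then $\ess h = n$; indeed, picking $\mathbf{a} \notin \Aneq$ with $h(\mathbf{a}) \neq 0$ and changing its $i$-th entry to agree with another entry lands in $\Aneq$, where $h$ vanishes, witnessing that $x_i$ is essential. Lemma (b): if $\ess g < n$ (equivalently $p \geq 1$), then $\Ess g = \Ess(g|_{\Aneq})$; given a pair witnessing essentiality of an $x_i$, which differs only in the coordinate $i$, one uses the inessential coordinates of $g$ to introduce a repeated entry, pushing the pair into $\Aneq$ without altering the $g$-values, and the worst case $p = 1$ is exactly where the hypothesis $n \geq 3$ supplies a spare coordinate $k \in \Ess g \setminus \{i\}$. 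Granting these, $f = g$ on $\Aneq$ gives $\qa f = \ess(f|_{\Aneq}) = \ess(g|_{\Aneq}) = \ess g = n - p$ by Lemma (b) and the preliminaries. To obtain $\ess f = n$ I would split the variables: each variable essential in $g$ is essential in $f$ because by Lemma (b) its witnesses may be chosen inside $\Aneq$, where $f = g$; each variable inessential in $g$ is essential in $f$ because of $h$, comparing the tuple $\mathbf{a}$ of Lemma (a) with a repeat-creating neighbour, across which $g$ is constant while $h$ drops from a nonzero value to $0$.

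For uniqueness, suppose $f = g_1 + h_1 = g_2 + h_2$ are two decompositions of the form (ii). Both $h_i$ vanish on $\Aneq$, so $g_1|_{\Aneq} = f|_{\Aneq} = g_2|_{\Aneq}$; hence $\Ess(g_1|_{\Aneq}) = \Ess(g_2|_{\Aneq})$, and Lemma (b) upgrades this to $\Ess g_1 = \Ess g_2 =: E$ with $|E| = n - p$. It then remains to see that a function depending only on $E$ is determined off $\Aneq$ by its restriction to $\Aneq$: every $\mathbf{a} \in A^n$ agrees on its $E$-coordinates with some $\mathbf{a}' \in \Aneq$ obtained either by setting an inessential coordinate equal to an $E$-coordinate (possible when $E \neq \emptyset$) or by equalizing two inessential coordinates (possible when $p \geq 2$), so $g_1(\mathbf{a}) = g_1(\mathbf{a}') = g_2(\mathbf{a}') = g_2(\mathbf{a})$. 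Thus $g_1 = g_2$, and then $h_1 = -g_1 + f = -g_2 + f = h_2$. I expect the edge case $p = 1$---both in Lemma (b) and in this final matching step, where $n \geq 3$ is indispensable---to be the only genuinely delicate point; the rest is a direct group-theoretic computation.
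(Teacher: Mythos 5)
The paper never actually proves Theorem~\ref{thm:sum}: it is quoted from~\cite{GapA} as a known result, so there is no in-paper argument to compare yours against, and your proposal has to stand as a self-contained proof --- which it does. The implication (i)$\Rightarrow$(ii) is indeed pure bookkeeping, and your Lemma~(a) is sound. Lemma~(b) is the crux, and your sketch contains the right mechanism: given $i \in \Ess g$ witnessed by a pair differing only in coordinate $i$, pick an inessential coordinate $k$ (automatically $k \neq i$) and a third coordinate $j \notin \{i, k\}$ --- this is exactly where $n \geq 3$ enters when $p = 1$ --- and overwrite coordinate $k$ in \emph{both} tuples with the common value in coordinate $j$; the resulting pair lies in $\Aneq$, still differs only in coordinate $i$, and has the same $g$-values, so $\Ess g = \Ess(g|_{\Aneq})$. (Note that one must equalize coordinate $k$ with a \emph{common} coordinate $j$; setting $x_k := x_i$ would make the two tuples differ in two coordinates and break the witnessing property --- your wording is loose here but the intended construction is the correct one.) The split of $\Ess f$ is complete: variables essential in $g$ are witnessed inside $\Aneq$ where $f = g$, and variables inessential in $g$ are witnessed by comparing a tuple with $h \neq 0$ against a repeat-creating neighbour, using cancellation in $(B; +)$. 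The uniqueness argument also handles the edge cases correctly: for $p = 1$ the set $E$ has $n - 1 \geq 2$ elements, so the single coordinate outside $E$ can be matched to one inside, while for $p \geq 2$ (including $p = n$, where $E = \emptyset$) two coordinates outside $E$ can be equalized. Two further merits: writing $h := -g + f$ keeps everything valid for non-abelian $B$, matching the stated generality, and your appeal to the identity $\qa f = \ess f|_{\Aneq}$ for $n \neq 2$ is legitimate since the paper records it in the preliminaries. In short, the proposal is a correct and appropriately elementary proof of a statement the paper only cites.
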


Theorems~\ref{thm:gap} and~\ref{thm:sum} lead to the following characterization of functions with arity gap at least $3$. A similar description was proposed by Shtrakov and Koppitz~\cite{SK}.

\begin{corollary}
\label{cor:sum}
Assume that $(B; +)$ is a group with neutral element\/ $0$. Let $f \colon A^n \to B$, $n \geq 3$, and\/ $3 \leq p \leq n$. Then the following two conditions are equivalent:

\begin{enumerate}[\rm (i)]
\item \label{itemthmsum1}
$\ess f = n$ and\/ $\gap f = p$.

\item \label{itemthmsum2}
There exist functions $g, h \colon A^n \to B$ such that $f = g + h$, $h|_{\Aneq} \equiv 0$, $h \not\equiv 0$, and\/ $\ess g = n - p$.
\end{enumerate}
The decomposition $f = g + h$ given above is unique.
\end{corollary}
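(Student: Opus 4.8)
The plan is to derive the corollary by chaining together Theorem~\ref{thm:gap}\ref{thmitem1} and Theorem~\ref{thm:sum}, both of which are already available. First I would observe that the phrase ``$f$ depends on all of its variables'' occurring in the hypotheses of Theorem~\ref{thm:gap} is by definition the same as the condition $\ess f = n$. Thus throughout the argument I may treat $\ess f = n$ as a standing assumption shared by both cited results, and the whole task reduces to translating the statement $\gap f = p$ into the statement $\qa f = n - p$, and then translating the latter into the existence of the decomposition $f = g + h$.

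Carrying this out, under the assumption $\ess f = n$ and for the given range $3 \leq p \leq n$, Theorem~\ref{thm:gap}\ref{thmitem1} asserts exactly that $\gap f = p$ if and only if $\qa f = n - p$. Hence condition~\ref{itemthmsum1} of the corollary is equivalent to the conjunction ``$\ess f = n$ and $\qa f = n - p$,'' which is verbatim condition~\ref{itemlemmasum1} of Theorem~\ref{thm:sum}. Since $(B; +)$ is a group, $n \geq 3$, and our range $3 \leq p \leq n$ is contained in the admissible range $1 \leq p \leq n$ of Theorem~\ref{thm:sum}, that theorem applies and shows condition~\ref{itemlemmasum1} to be equivalent in turn to condition~\ref{itemlemmasum2}, namely the existence of $g, h \colon A^n \to B$ with $f = g + h$, $h|_{\Aneq} \equiv 0$, $h \not\equiv 0$, and $\ess g = n - p$. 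This is precisely condition~\ref{itemthmsum2} of the corollary, and the asserted uniqueness of $(g, h)$ is inherited directly from the uniqueness clause of Theorem~\ref{thm:sum}.

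I do not expect any serious obstacle here: the mathematical content is entirely carried by the two cited theorems, and the only thing that genuinely needs checking is that the numerical hypotheses on $n$ and $p$ line up. The one point worth flagging is that the restriction $p \geq 3$ is essential for this clean argument, since it is exactly the regime in which Theorem~\ref{thm:gap}\ref{thmitem1} gives the simple equality $\gap f = p \iff \qa f = n - p$; for $p \in \{1, 2\}$ the connection between arity gap and quasi-arity is governed by the more delicate cases~\ref{thmitem2} and~\ref{thmitem3} of Theorem~\ref{thm:gap} (involving $\oddsupp$ and the $n = 3$ exception), and the equivalence would no longer take this form.
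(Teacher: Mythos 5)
Your proof is correct and follows exactly the route the paper intends: the paper gives no separate proof of Corollary~\ref{cor:sum}, stating only that Theorems~\ref{thm:gap} and~\ref{thm:sum} ``lead to'' it, and your argument---chaining Theorem~\ref{thm:gap}\eqref{thmitem1} (valid precisely because $p \geq 3$) with the equivalence and uniqueness clause of Theorem~\ref{thm:sum}---is that intended derivation, with the hypothesis ranges correctly checked.
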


Analogous decompositions $f = g + h$ were presented in~\cite{GapA} for functions $f \colon A^n \to B$ with $\gap f = 2$, in which either $\ess g = n - 2$ or $g$ is a sum of functions of essential arity at most $n - 2$.

Having the previous results as our starting point, we present in the current paper yet another refinement of Theorem~\ref{thm:gap}. Namely, we study the arity gap of polynomial functions over arbitrary fields. We will obtain further, more explicit decomposition schemes.

The paper is organised as follows. In Section~\ref{sect general fields}, we recall the basic notions and introduce preliminary results which will be needed throughout the paper. In particular, we provide a general decomposition scheme for polynomial functions over arbitrary fields with arity gap at least $3$.
In subsequent sections we focus on functions with arity gap $2$.
More precisely, in Section~\ref{sect char2}, we describe the polynomial functions determined by $\oddsupp$, and we obtain decomposition schemes for functions with arity gap $2$ over finite fields and fields of characteristic $2$.
In Section~\ref{sect char0}, we consider the case of fields of characteristic $0$. In this case, we show that if $f$ is a polynomial function such that $f|_{A^n_{=}}$ is determined by $\oddsupp$, then $f|_{A^n_{=}}$ is constant. Hence, simpler decomposition schemes are available for polynomial functions with arity gap $2$.
The question whether similar decomposition schemes exist over infinite fields of odd characteristic is addressed in Section~\ref{sec:oddchar}. We answer negatively to this question by means of an illustrative example.


\section{Arity gap of polynomial functions over fields}
\label{sect general fields}

In what follows, we will assume that the reader is familiar with the basic notions of algebra, such as rings, unique factorization domains, fields, vector spaces, polynomials and polynomial functions. However, we find it useful to recall the following well-known result.

\begin{fact}
\label{fact:field}
Every function $f \colon F^n \to F$ on a finite field $F$ is a polynomial function over $F$.
\end{fact}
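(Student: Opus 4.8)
The plan is to prove that every function $f \colon F^n \to F$ on a finite field $F$ is a polynomial function, and the natural approach is via interpolation. First I would treat the unary case, establishing that every $f \colon F \to F$ coincides with a polynomial. Since $F$ is finite, say $\card{F} = q$, I can write down an explicit Lagrange-type interpolation polynomial: for each $a \in F$ the indicator polynomial $\delta_a(x) := 1 - (x - a)^{q-1}$ evaluates to $1$ at $x = a$ and to $0$ at every other element of $F$, using the fact that $b^{q-1} = 1$ for every nonzero $b \in F$ (a consequence of Lagrange's theorem applied to the multiplicative group $F \setminus \{0\}$). Then $\sum_{a \in F} f(a) \, \delta_a(x)$ is a polynomial inducing $f$.

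Next I would pass to the $n$-ary case. The clean way is to observe that the indicator function of a single point $\vect{a} = (a_1, \dots, a_n) \in F^n$ is induced by the product $\prod_{i=1}^n \delta_{a_i}(x_i)$, which evaluates to $1$ at $\vect{a}$ and to $0$ at every other tuple of $F^n$. Consequently the general polynomial
\[
p(x_1, \dots, x_n) := \sum_{\vect{a} \in F^n} f(\vect{a}) \prod_{i=1}^n \delta_{a_i}(x_i)
\]
induces $f$, since evaluating $p$ at any point $\vect{b} \in F^n$ kills every term except the one indexed by $\vect{a} = \vect{b}$, leaving exactly $f(\vect{b})$. This sum is finite precisely because $F^n$ is finite, which is where the hypothesis that $F$ is finite is used.

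I do not anticipate a genuine obstacle here, as the result is classical and the construction is completely explicit; the only points requiring care are purely verificational. The main thing to get right is the justification that $\delta_a$ behaves as claimed, which rests on the identity $b^{q-1} = 1$ for all $b \in F \setminus \{0\}$ and hence $\delta_a(b) = 1 - (b-a)^{q-1}$ equals $0$ when $b \neq a$ and $1 - 0^{q-1} = 1$ when $b = a$. Once the unary indicators are understood, the multivariate statement follows by the product-and-sum argument above with no further difficulty, so the entire proof is a short interpolation argument rather than anything demanding a subtle idea.
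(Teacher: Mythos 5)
Your proof is correct: the unary indicator $\delta_a(x) = 1 - (x-a)^{q-1}$ does evaluate to $1$ at $a$ and $0$ elsewhere by the identity $b^{q-1}=1$ for $b \neq 0$, and the product-and-sum construction then yields a polynomial inducing any prescribed $f \colon F^n \to F$. Note that the paper offers no proof at all for this statement --- it is recorded as a well-known fact --- so there is nothing to compare against; your Lagrange-type interpolation is exactly the standard classical argument one would cite for it.
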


Polynomials over infinite fields are in one-to-one correspondence with polynomial functions.
Fact~\ref{fact:field} establishes a correspondence between polynomials and functions over finite fields, which is not bijective. This correspondence can be made bijective by assuming that we only consider polynomials over a given finite field, say $F = \GF(q)$, in which the exponent of every variable in every monomial is at most $q-1$; we shall call such polynomials over finite fields \emph{canonical.} In the case of infinite fields, every polynomial is \emph{canonical.}

Given a polynomial function $f \colon F^n \to F$, we denote by $P_f$ the unique canonical polynomial which induces $f$. Given a polynomial $p \in F[x_1, \dots, x_n]$, we denote by $\overline{p}$ the function $f \colon F^n \to F$ induced by $p$.
Note that $\overline{p + q} = \overline{p} + \overline{q}$ for all $p, q \in F[x_1, \dots, x_n]$.

\begin{fact}
\label{fact:essential}
A variable $x_i$ is essential in a polynomial function $f \colon F^n \to F$ if and only if $x_i$ occurs in $P_f$.
\end{fact}

Let $F$ be a field, and let us apply the results of Section~\ref{sect prel} in the case $A = B = F$ for polynomial functions $f \colon F^n \to F$.

\begin{lemma}
\label{lemma minors}
If $f$ is a polynomial function over $F$, then the functions $g$ and $h$ in the decomposition $f = g + h$ given in Theorem~\ref{thm:sum} and Corollary~\ref{cor:sum} are also polynomial functions.
\end{lemma}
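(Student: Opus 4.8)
The plan is to take the functions $g, h$ furnished by Corollary~\ref{cor:sum} and show that $g$ in fact coincides with a suitable identification minor of $f$, which is visibly a polynomial function; the conclusion for $h$ will then follow immediately. The first observation I would record is that, since $h|_{\Aneq} \equiv 0$, the equation $f = g + h$ forces $g|_{\Aneq} = f|_{\Aneq}$, so $g$ is a support of $f$. Moreover $\ess g = n - p \le n - 3 < n$ (as $p \geq 3$), so $g$ has at least one inessential variable; I would fix such an index $i$ and choose any $j \in [n] \setminus \{i\}$, which is possible since $n \geq 3$.

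The key step will be to prove that $g = \minor{f}{i}{j}$ as functions. I would verify this pointwise. For an arbitrary tuple $(a_1, \dots, a_n)$, the value $\minor{f}{i}{j}(a_1, \dots, a_n) = f(a_1, \dots, a_{i-1}, a_j, a_{i+1}, \dots, a_n)$ is the value of $f$ at a tuple whose $i$-th and $j$-th entries are both equal to $a_j$, hence at a point of $\Aneq$, where $g$ and $f$ agree by the support property. On the other hand, that same tuple differs from $(a_1, \dots, a_n)$ only in the $i$-th coordinate, which is inessential for $g$; therefore the common value is also equal to $g(a_1, \dots, a_n)$. Combining the two identities yields $\minor{f}{i}{j}(a_1, \dots, a_n) = g(a_1, \dots, a_n)$ for every tuple, as desired.

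It then remains only to note that $\minor{f}{i}{j}$ is induced by the polynomial obtained from a polynomial representing $f$ by substituting $x_j$ for $x_i$; hence $g = \minor{f}{i}{j}$ is a polynomial function, and $h = f - g$, being a difference of polynomial functions, is a polynomial function as well. I expect the single nontrivial point to be the identification $g = \minor{f}{i}{j}$: the argument hinges on combining the support property of $g$ with the presence of an inessential variable, and one must check carefully that collapsing the $i$-th coordinate onto the $j$-th both lands in $\Aneq$ and leaves the value of $g$ unchanged. Everything else is routine. I would also remark that over a finite field the statement is immediate from Fact~\ref{fact:field}, since there every function $F^n \to F$ is a polynomial function; the advantage of the argument above is that it is uniform in $F$, covering infinite fields equally well.
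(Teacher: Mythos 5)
Your proof is correct and is essentially the paper's own argument: both establish that $g$ coincides with the identification minor $\minor{f}{i}{j}$ for an index $i$ inessential in $g$, using $h|_{\Aneq} \equiv 0$, and then conclude that $h = f - g$ is polynomial as well. The only cosmetic difference is that the paper writes this in one line as $\minor{f}{i}{j} = \minor{g}{i}{j} + \minor{h}{i}{j} = g + 0 = g$, whereas you verify the same identity pointwise (and correctly observe that a single inessential variable of $g$ suffices).
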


\begin{proof}
Since $\ess g = n - p \leq n - 1$, the function $g$ has an inessential variable, say the $i$-th variable is inessential in $g$. Let $j \neq i$. We clearly have $\minor{g}{i}{j} = g$, and since $h|_{\Aneq} \equiv 0$, we have
\[
\minor{f}{i}{j} = \minor{g}{i}{j} + \minor{h}{i}{j} = g + 0 = g.
\]
Thus, $g$ is a simple minor of $f$ and hence a polynomial function. Then $h = f - g$ is a polynomial function as well.
\end{proof}

\begin{lemma}
\label{lemma Delta}
If $h$ is an $n$-ary polynomial function over $F$, then $h|_{F^n_{=}} \equiv 0$ if and only if $h$ is induced by a multiple of the polynomial
\[
\Delta_n = \prod_{1 \leq i < j \leq n} (x_i - x_j) \in F[x_1, \dots, x_n].
\]
\end{lemma}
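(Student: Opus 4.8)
The plan is to prove the two implications separately, handling the easy inclusion uniformly and then splitting the converse according to whether $F$ is infinite or finite, since the passage between polynomials and polynomial functions behaves quite differently in these two cases. For the ``if'' direction, suppose $h = \Delta \cdot q$ for some polynomial (function) $q$. If $(a_1,\dots,a_n) \in F^n_{=}$, then $a_i = a_j$ for some $i \neq j$, so the factor $x_i - x_j$ of $\Delta$ vanishes at this point; hence $\Delta(a_1,\dots,a_n) = 0$ and therefore $h(a_1,\dots,a_n) = 0$. Thus $h|_{F^n_{=}} \equiv 0$, and this argument is valid over every field.

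For the converse over an infinite field, I would show that each linear factor $x_i - x_j$ divides $h$ as a polynomial and then combine these divisibilities. Fixing a pair, say $i = 1$, $j = 2$, I regard $h$ as a polynomial in $x_1$ with coefficients in $F[x_2,\dots,x_n]$ and divide by the monic polynomial $x_1 - x_2$, obtaining $h = (x_1 - x_2)\,q_1 + r$ with remainder $r = h(x_2, x_2, x_3, \dots, x_n) \in F[x_2,\dots,x_n]$. The hypothesis $h|_{F^n_{=}} \equiv 0$ forces $r$ to vanish at every point of $F^{n-1}$, and since $F$ is infinite this means $r = 0$ as a polynomial, so $(x_1 - x_2) \mid h$. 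The same holds for every pair $i < j$. As the polynomials $x_i - x_j$ are pairwise non-associate irreducibles in the unique factorization domain $F[x_1,\dots,x_n]$, their product $\Delta$ then divides $h$, yielding the decomposition $h = \Delta \cdot q$.

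The converse over a finite field is where the real obstacle lies: over $\GF(q)$ the remainder $r$ above vanishes only as a function, not as a polynomial, so the division-and-factorization argument breaks down. Here I would instead exploit Fact~\ref{fact:field}. On the complement $F^n \setminus F^n_{=}$ every factor $x_i - x_j$ is nonzero, so $\Delta$ takes only nonzero values there, and I can define $q$ pointwise by $q(\vect{a}) := h(\vect{a})/\Delta(\vect{a})$ on this set and, say, $q(\vect{a}) := 0$ on $F^n_{=}$. Then $\Delta \cdot q$ and $h$ agree everywhere: off $F^n_{=}$ by construction, and on $F^n_{=}$ because both sides vanish (using the hypothesis on $h$ together with $\Delta|_{F^n_{=}} \equiv 0$). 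By Fact~\ref{fact:field} the function $q$ is itself a polynomial function, so $h = \Delta \cdot q$ exhibits $h$ as the required multiple.

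The main subtlety throughout is precisely this polynomial-versus-function distinction: the naive attempt to ``peel off'' one factor $x_i - x_j$ at a time as a function identity fails over a finite field, because the cofactor is only pinned down off the diagonal and need not vanish on the smaller coincidence sets. I expect the cleanest resolution to be exactly the dichotomy above, with genuine polynomial divisibility and unique factorization carrying the infinite case, and the pointwise quotient together with Fact~\ref{fact:field} disposing of the finite case in one stroke.
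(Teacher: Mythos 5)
Your proof is correct, and for infinite fields it is essentially the paper's argument: the paper views $h$ as an element of $R[x_n]$ with $R = F[x_1,\dots,x_{n-1}]$, asserts that $x_1,\dots,x_{n-1}$ are roots of $h$ because $h|_{F^n_{=}} \equiv 0$, deduces that each $x_i - x_n$ (and, repeating with other pairs, each $x_i - x_j$) divides $h$, and invokes unique factorization exactly as you do --- but it does this \emph{uniformly}, with no case distinction between finite and infinite $F$. Where you genuinely depart from the paper is the finite-field branch, and your instinct there is sound: the paper's ``root'' step is literally valid only when vanishing as a function implies vanishing as a polynomial, i.e.\ over infinite fields. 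Indeed, over $\GF(q)$ the polynomial $h = x_1^{q-1}x_2 - x_1$ vanishes on $F^2_{=}$ as a function, yet the formal substitution $x_2 \mapsto x_1$ yields $x_1^q - x_1 \neq 0$, so $x_1 - x_2$ does \emph{not} divide $h$ in $F[x_1,x_2]$; over finite fields the lemma is therefore true only in the functional sense sanctioned by the paper's convention of identifying polynomials over $\GF(q)$ with the functions they induce, and it is precisely your pointwise-quotient argument (divide $h$ by $\Delta$ off $F^n_{=}$, extend by $0$, and apply Fact~\ref{fact:field}) that proves this functional version. So your case split costs a little length but supplies a justification that the paper's uniform proof actually glosses over in the finite case --- the case needed for Proposition~\ref{prop diagoddsuppchar2} and Theorem~\ref{thm:char2gapgeq2} --- while the paper's single argument is shorter and fully rigorous for characteristic $0$ (indeed for any infinite field), which is the setting of its remaining applications.
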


\begin{proof}
It is clear that if $h$ is induced by a multiple of $\Delta_n$, then $h|_{F^n_{=}} \equiv 0$.
For the converse implication, we need to distinguish between the cases of finite and infinite $F$. Assume first that $F$ is infinite, and let us suppose that $h|_{F^n_{=}} \equiv 0$.
Let us consider $P_h$ as an element of $R[x_n]$, where $R$ denotes the ring $F[x_1, \dots, x_{n-1}]$. Since $h|_{F^n_{=}} \equiv 0$, each one of the elements $x_1, \dots, x_{n-1} \in R$ is a root of the unary polynomial $P_h(x_n) \in R[x_n]$.
Therefore $P_h$ is divisible by $x_i - x_n$ for all $i = 1, \dots, n - 1$. Repeating this argument with $x_j$ in place of $x_n$, we can see that $x_i - x_j$ divides $P_h$ for all $1 \leq i < j \leq n$. Since these divisors of $P_h$ are relatively prime (and $F[x_1, \dots, x_n]$ is a unique factorization domain), we can conclude that $P_h$ is divisible by their product $\Delta_n$.

Assume then that $F$ is finite. Define the function $h' \colon F^n \to F$ by the rule
\[
h'(\vect{a}) =
\begin{cases}
h(\vect{a}) \cdot (\overline{\Delta_n}(\vect{a}))^{-1}, & \text{if $\vect{a} \in F^n \setminus F^n_{=}$,} \\
0, & \text{if $\vect{a} \in F^n_{=}$.}
\end{cases}
\]
Observe that $\overline{\Delta_n}(\vect{a}) \neq 0$ for every $\vect{a} \in F^n \setminus F^n_{=}$; hence $h'$ is well defined. (In fact, $h'$ could be defined in an arbitrary way on $F^n_{=}$.) Clearly $h = h' \cdot \overline{\Delta_n}$.
By Fact~\ref{fact:field}, $h'$ is a polynomial function. Thus $h$ is induced by the polynomial $P_{h'} \cdot \Delta_n$.
\end{proof}

Combining the previous two lemmas with Corollary~\ref{cor:sum}, we obtain the following description of polynomial functions over $F$ with arity gap at least $3$.

\begin{theorem}
\label{thm poly gap3}
Let $F$ be a field and let $f \colon F^n \to F$ be a polynomial function of arity at least $4$. Then $\gap f = p \geq 3$ if and only if there exist polynomials $P, Q \in F[x_1, \dots, x_n]$ such that $f = \overline{P} + \overline{Q}$, $P$ is canonical, exactly $n - p$ variables occur in $P$, and $Q$ is a nonzero multiple of the polynomial $\Delta_n$ such that $\overline{Q}$ is not identically $0$. Moreover, if $f = \overline{P'} + \overline{Q'}$, where $P'$ is canonical, $n - p$ variables occur in $P'$ and $Q'$ is a nonzero multiple of $\Delta_n$ such that $\overline{Q'}$ is not identically $0$, then $P' = P$ and $\overline{Q'} = \overline{Q}$.
\end{theorem}


\section{Functions determined by $\oddsupp$ and the arity gap of polynomial functions over fields of characteristic $2$}
\label{sect char2}

We refine Fact~\ref{fact oddsupp} for polynomial functions over an arbitrary field $F$. To this extent, we need some formalism. We use the following notation:
\begin{itemize}
\item If $F$ is infinite, then $N_F$ denotes the set $\mathbb{N}$ of nonnegative integers, $M_F$ denotes the set of all nonnegative even integers, and $\oplus_F$ denotes the usual addition of nonnegative integers.
\item If $F$ has finite order $q$, then $N_F$ denotes the set $\{0, 1, \dots, q-1\}$, $M_F := N_F$, and $\oplus_F$ is the operation on $N_F$ given by the following rules:
\begin{itemize}
\item $0 \oplus_F 0 = 0$.
\item If $a \neq 0$ or $b \neq 0$, then $a \oplus_F b = c$, where $c$ is the unique number in $\{1, \dots, q-1\}$ such that $c \equiv a + b \pmod{q-1}$.
\end{itemize}
\end{itemize}
Define the map $\tau_F \colon N_F \to M_F$ by the rule $m \mapsto m \oplus_F m$.

\begin{remark}
\label{rem:tauF}
If $F$ is infinite or of even order, then $\tau_F$ is a bijection that has $0$ as a fixed point.
\end{remark}

\begin{lemma}
\label{lem:f21}
Let $F$ be an arbitrary field, and let $f \colon F^n \to F$ be a polynomial function with
\[
P_f = \sum_{\vect{k} = (k_1, \dots, k_n) \in N_F^n} c_{\vect{k}} x_1^{k_1} x_2^{k_2} \cdots x_n^{k_n}.
\]
Then $\minor{f}{2}{1}$ does not depend on $x_1$ if and only if for all $(k, k_3, \dots, k_n) \in N_F^{n-1}$ with $k \neq 0$,
\[
\sum_{\substack{(a_1, a_2) \in N_F^2 \\ a_1 \oplus_F a_2 = k}} c_{(a_1, a_2, k_3, \dots, k_n)} = 0.
\]
\end{lemma}

\begin{proof}
The canonical polynomial for $\minor{f}{2}{1}$ is
\[
\sum_{(b_1, b_3, \dots, b_n) \in N_F^{n-1}} d_{(b_1, b_3, \dots, b_n)} x_1^{b_1} x_3^{b_3} \cdots x_n^{b_n},
\]
where
\[
d_{(b_1, b_3, \dots, b_n)} =
\sum_{\substack{(a_1, a_2) \in N_F^2 \\ a_1 \oplus a_2 = b_1}} c_{(a_1, a_2, b_3, \dots, b_n)}.
\]
By Fact~\ref{fact:essential}, the condition that $\minor{f}{2}{1}$ does not depend on $x_1$ is equivalent to the condition that $d_{(b_1, b_3, \dots, b_n)} = 0$ for all $(b_1, b_3, \dots, b_n) \in N_F^{n-1}$ such that $b_1 \neq 0$.
\end{proof}

\begin{proposition}
\label{prop oddsuppchar2}
Let $F$ be an arbitrary field, and let $f \colon F^n \to F$ be a polynomial function with
\[
P_f = \sum_{\vect{k} = (k_1, \dots, k_n) \in N_F^n} c_{\vect{k}} x_1^{k_1} x_2^{k_2} \cdots x_n^{k_n}.
\]

Then $f$ is determined by $\oddsupp$ if and only if
\begin{enumerate}[\rm (A)]
\item \label{cond symmetric}
$f$ is symmetric, i.e.,
$c_{(k_1, \dots, k_n)} = c_{(l_1, \dots, l_n)}$
whenever there is a permutation $\pi \in S_n$ such that $k_i = l_{\pi(i)}$ for all $i \in [n]$, and

\item \label{cond coeff}
for all $(k, k_3, \dots, k_n) \in N_F^{n-1}$ with $k \neq 0$,
\[
\sum_{\substack{(a_1, a_2) \in N_F^2 \\ a_1 \oplus_F a_2 = k}} c_{(a_1, a_2, k_3, \dots, k_n)} = 0.
\]
\end{enumerate}

In particular, if the characteristic of $F$ is\/ $2$, then $f$ is determined by\/ $\oddsupp$ if and only if condition \eqref{cond symmetric} above holds together with
\begin{enumerate}
\item[\rm (B${}_2$)] $c_{(k, k, k_3, \dots, k_n)} = 0$ for all $(k, k, k_3, \dots, k_n) \in N_F^n$ with $k \neq 0$.
\end{enumerate}
\end{proposition}

\begin{proof}
By Fact~\ref{fact oddsupp}, $f$ is determined by $\oddsupp$ if and only if $f$ is totally symmetric (i.e., \eqref{cond symmetric} holds) and $\minor{f}{2}{1}$ does not depend on $x_1$ (i.e., \eqref{cond coeff} holds, by Lemma~\ref{lem:f21}).

Assume then that the characteristic of $F$ is $2$. We need to prove that condition \eqref{cond coeff} is equivalent to (B${}_2$) under the assumption that $f$ is totally symmetric. Let us analyse more carefully the coefficient
\begin{multline*}
d_{(b_1, b_3, \dots, b_n)}
= \sum_{\substack{(a_1, a_2) \in N_F^2 \\ a_1 \oplus_F a_2 = b_1}} c_{(a_1, a_2, b_3, \dots, b_n)} \\
= \underbrace{\sum_{\substack{a_1 \in N_F \phantom{\text{\makebox[0mm]{$)^2$}}} \\ a_1 \oplus_F a_1 = b_1  \phantom{\text{\makebox[0mm]{,}}}}} c_{(a_1, a_1, b_3, \dots, b_n)}}_{\text{(I)}} + \underbrace{\sum_{\substack{(a_1, a_2) \in N_F^2 \\ a_1 < a_2,\, a_1 \oplus_F a_2 = b_1}} (c_{(a_1, a_2, b_3, \dots, b_n)} + c_{(a_2, a_1, b_3, \dots, b_n)})}_{\text{(II)}}.
\end{multline*}
Assuming that $f$ is totally symmetric, we have $c_{(a_1, a_2, b_3, \dots, b_n)} = c_{(a_2, a_1, b_3, \dots, b_n)}$. Hence summand (II) above equals $2 \cdot C$ for some $C \in F$, which is equal to $0$ since $F$ has characteristic $2$.

As for summand (I), observe first that if $F$ is infinite and $b_1$ is odd, then there is no $a_1 \in N_F$ such that $a_1 \oplus_F a_1 = b_1$; hence the sum in (I) is empty and equals $0$. Thus, in this case, we have $d_{(b_1, b_3, \dots, b_n)} = 0$.
Otherwise, i.e., if $F$ is finite or if $F$ is infinite and $b_1$ is even, the sum in (I) has just one summand, namely the one indexed by $a_1 = \tau_F^{-1}(b_1)$ ($\tau_F$ is a bijection by Remark~\ref{rem:tauF}), and we have $d_{(b_1, b_3, \dots, b_n)} = c_{(\tau_F^{-1}(b_1), \tau_F^{-1}(b_1), b_3, \dots, b_n)}$.

By the above observations, we conclude that under the assumption that $F$ has characteristic $2$ and $f$ is totally symmetric, condition \eqref{cond coeff} is equivalent to the condition that $c_{(k, k, k_3, \dots, k_n)} = 0$ for all $(k, k, k_3, \dots, k_n) \in N_F^n$ with $k \neq 0$.
\end{proof}

We reassemble in the following remark some facts that have been established in~\cite{GapA} (more specifically, in the second paragraph of Section~5 and in Theorem~5.2 of~\cite{GapA}).

\begin{remark}
\label{rem:GapA}
Assume that $B$ is a set with a Boolean group structure (i.e., an abelian group such that $x + x = 0$ holds identically). Let $n \geq 3$, and assume that $f \colon A^n \to B$ is a function such that $f|_{A^n_{=}}$ is determined by $\oddsupp$. Fix an element $a \in A$, and let $\varphi \colon A^{n-2} \to B$ be the function given by $\varphi(a_1, \dots, a_{n-2}) := f(a_1, \dots, a_{n-2}, a, a)$ for all $a_1, \dots, a_{n-2} \in A$. (Since $f|_{A^n_{=}}$ is determined by $\oddsupp$, the definition of $\varphi$ is independent from the choice of $a$.) Then $\varphi$ is determined by $\oddsupp$, i.e., $\varphi = \varphi^* \circ \oddsupp|_{A^{n-2}}$ for some function $\varphi^* \colon \mathcal{P}(A) \to B$. Let $\widetilde{\varphi} \colon A^n \to B$ be the function given by
\[
\widetilde{\varphi}(a_1, \dots, a_n) =
\sum_{\substack{k < n \\ 2 | n - k}} \sum_{1 \leq i_1 < \dots < i_k \leq n} \varphi^*(\oddsupp(a_{i_1}, \dots, a_{i_k})),
\]
for all $a_1, \dots, a_n \in A$. Each summand $\varphi^*(\oddsupp(a_{i_1}, \dots, a_{i_k}))$ on the right side is an identification minor of $\varphi$. The function $\widetilde{\varphi}$ is determined by $\oddsupp$ and $\widetilde{\varphi}|_{A^n_{=}} = f|_{A^n_{=}}$.
\end{remark}

\begin{proposition}
\label{prop diagoddsuppchar2}
Let $F$ be a field, and let $f \colon F^n \to F$ be a polynomial function.
If $F$ is finite or the characteristic of $F$ is\/ $2$, then
$f|_{F^n_{=}}$ is determined by\/ $\oddsupp$ if and only if there exist polynomials $P, Q \in F[x_1, \dots, x_n]$ such that $f = \overline{P} + \overline{Q}$, $\overline{P}$ is determined by\/ $\oddsupp$, and $Q$ is a multiple of the polynomial $\Delta_n$.
\end{proposition}

\begin{proof}
For sufficiency, let us assume that
$f = \overline{P} + \overline{Q}$, where $P$ and $Q$ are as in the statement of the proposition. Since $\overline{P}$ is determined by $\oddsupp$, the restriction $\overline{P}|_{F^n_{=}}$ is obviously determined by $\oddsupp$ as well. Moreover, $\overline{Q}|_{F^n_{=}} \equiv 0$ by Lemma~\ref{lemma Delta}. Thus, $f|_{F^n_{=}} = \overline{P}|_{F^n_{=}} + \overline{Q}|_{F^n_{=}} = \overline{P}|_{F^n_{=}}$ is determined by $\oddsupp$.

For necessity, assume first that $F$ is finite. If $f|_{F^n_{=}}$ is determined by $\oddsupp$, then there is a (not necessarily unique) function $g$ such that $g$ is determined by $\oddsupp$ and $f|_{F^n_{=}} = g|_{F^n_{=}}$. By Fact~\ref{fact:field}, $g$ is a polynomial function; hence so is $h = f - g$. By Lemma~\ref{lemma Delta}, $P_h$ is a multiple of the polynomial $\Delta_n$.

Assume then that $F$ is a field of characteristic $2$. Since the additive group of any field of characteristic $2$ is a Boolean group, Remark~\ref{rem:GapA} applies to operations on $F$. Assume that $f \colon F^n \to F$ is a polynomial function such that $f|_{F^n_{=}}$ is determined by $\oddsupp$, and let $\varphi$, $\varphi^*$, and $\widetilde{\varphi}$ be as defined in Remark~\ref{rem:GapA}. Then $\varphi$ is also a polynomial function. The functions $\varphi^*(\oddsupp(a_{i_1}, \dots, a_{i_k}))$, being identification minors of $\varphi$, are polynomial functions. Therefore, Remark~\ref{rem:GapA} implies that $\widetilde{\varphi}$ is a polynomial function and $\widetilde{\varphi}|_{F^n_{=}} = f|_{F^n_{=}}$. Letting $g := \widetilde{\varphi}$ and $h := f - g$, and arguing as in the previous paragraph, we conclude that $P_h$ is a multiple of the polynomial $\Delta_n$.
\end{proof}

\begin{theorem}
\label{thm:char2gapgeq2}
Let $F$ be a field of characteristic $2$, possibly infinite, and let $f \colon F^n \to F$ be a polynomial function of arity at least\/ $4$ which depends on all of its variables. Then\/ $\gap f = p \geq 2$ if and only if there exist polynomials $P, Q \in F[x_1, \dots, x_n]$ such that $f = \overline{P} + \overline{Q}$, $P$ is canonical, $Q$ is a multiple of the polynomial $\Delta_n$, and either
\begin{enumerate}[\rm (a)]
\item \label{item:char2gapgeq2:1}
exactly $n - p$ variables occur in $P$ and $\overline{Q} \neq 0$, or

\item \label{item:char2gapgeq2:2}
$P$ is not a constant polynomial and $\overline{P}$ satisfies conditions \eqref{cond symmetric} and \textnormal{(B${}_2$)} of Proposition~\ref{prop oddsuppchar2}.
\end{enumerate}
Otherwise\/ $\gap f = 1$.
\end{theorem}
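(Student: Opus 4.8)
The plan is to read the decomposition straight off Theorem~\ref{thm:gap}, which is available here because the arity being at least $4$ forces $n \neq 3$, so only parts~\eqref{thmitem1} and~\eqref{thmitem2} are ever invoked; throughout we specialize $A = B = F$, so that $\Aneq = F^n_{=}$. In broad strokes I would translate each quasi-arity condition supplied by Theorem~\ref{thm:gap} into a statement about polynomial summands, using Lemma~\ref{lemma Delta} (which identifies the polynomials vanishing on $F^n_{=}$ with the multiples of $\Delta$), Theorem~\ref{thm:sum} together with Lemma~\ref{lemma minors} (which realize the quasi-arity as an additive splitting with \emph{polynomial} summands), and Propositions~\ref{prop oddsuppchar2} and~\ref{prop diagoddsuppchar2} (which recast the $\oddsupp$-condition as conditions~\eqref{cond symmetric} and~\eqref{cond coeff} on $g$). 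I would then treat $p \geq 3$ and $p = 2$ separately.

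For $p \geq 3$ there is nothing new to do: by Theorem~\ref{thm:gap}\eqref{thmitem1}, $\gap f = p$ is equivalent to $\qa f = n - p$, and this equivalence, combined with Theorem~\ref{thm:sum} and Lemmas~\ref{lemma minors} and~\ref{lemma Delta}, is exactly Theorem~\ref{thm poly gap3}; it yields alternative~\eqref{item:char2gapgeq2:1} with a nonzero $h$. This disposes of every $p \geq 3$ through alternative~\eqref{item:char2gapgeq2:1}.

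For $p = 2$, Theorem~\ref{thm:gap}\eqref{thmitem2} splits the problem in two. If $\qa f = n - 2$, I would apply Theorem~\ref{thm:sum} to obtain $f = g + h$ with $h|_{F^n_{=}} \equiv 0$, $h \not\equiv 0$ and $\ess g = n - 2$; the argument of Lemma~\ref{lemma minors} still applies, since $g$ now has at least \emph{two} inessential variables (enough to form the identification minor that recovers $g$), so $g$, and hence $h = f - g$, is a polynomial, and Lemma~\ref{lemma Delta} upgrades $h|_{F^n_{=}} \equiv 0$ to ``$h$ is a nonzero multiple of $\Delta$''. As exactly $n - 2$ variables then occur in $g$, this is alternative~\eqref{item:char2gapgeq2:1} with $p = 2$. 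If instead $\qa f = n$ and $f|_{F^n_{=}}$ is determined by $\oddsupp$, I would feed this into Proposition~\ref{prop diagoddsuppchar2} to produce $f = g + h$ with $h$ a multiple of $\Delta$ and $g$ obeying conditions~\eqref{cond symmetric} and~\eqref{cond coeff}, which is alternative~\eqref{item:char2gapgeq2:2}.

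The delicate point—and the step I expect to be the main obstacle—is to make alternative~\eqref{item:char2gapgeq2:2} match the quasi-arity $n$ exactly, which is what the ``non-constant'' clause on $g$ is meant to encode. The forward direction is benign: $\qa f = n$ gives $\ess f|_{F^n_{=}} = n > 0$ (recall $\qa f = \ess f|_{F^n_{=}}$ since $n \neq 2$), so $f|_{F^n_{=}} = g|_{F^n_{=}}$ is non-constant, whence $g$ is non-constant. The reverse direction is where care is required: from a non-constant $g$ satisfying~\eqref{cond symmetric} and~\eqref{cond coeff} one must recover $\qa f = n$ and not a larger gap. Because $g$ is symmetric, $g|_{F^n_{=}}$ is a symmetric partial function, so $\ess f|_{F^n_{=}} = \ess g|_{F^n_{=}} \in \{0, n\}$, and the entire issue collapses to \emph{excluding the value $0$}, that is, to showing that the non-constancy of $g$ genuinely descends to the diagonal $F^n_{=}$ (one must check that a symmetric polynomial of this type cannot be non-constant only on the all-distinct tuples). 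This is the crux; it is precisely where the hypothesis on $g$ has to be exploited, and it must be handled with $n$ versus $q = \card{F}$ in mind. Once it is settled, the final clause ``otherwise $\gap f = 1$'' follows from Theorem~\ref{thm:gap}, since the remaining quasi-arities—namely $\qa f = n - 1$, and $\qa f = n$ \emph{without} the $\oddsupp$-property—force $\gap f = 1$.
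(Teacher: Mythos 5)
Your overall architecture is exactly that of the paper: the paper's proof is literally ``combine Theorem~\ref{thm:gap}, Theorem~\ref{thm:sum}, Lemma~\ref{lemma Delta} and Proposition~\ref{prop diagoddsuppchar2}'', and your treatment of $p \geq 3$ (via Theorem~\ref{thm poly gap3}) and of the case $\qa f = n-2$ (where you correctly note that the argument of Lemma~\ref{lemma minors} survives $p=2$ because it only needs an inessential variable of $g$ to form the recovering minor) is sound, as is the necessity direction for alternative~\eqref{item:char2gapgeq2:2}. But the attempt is not a complete proof: the step you yourself isolate as the crux --- that a non-constant $g$ satisfying \eqref{cond symmetric} and \eqref{cond coeff} forces $\ess g|_{F^n_{=}} = n$, i.e.\ $\qa f = n$ --- is never proved, only reduced (correctly, via total symmetry) to excluding the value $0$. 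Since the whole sufficiency direction of alternative~\eqref{item:char2gapgeq2:2} rests on that exclusion, this is a genuine gap.

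Moreover, your caution (``it must be handled with $n$ versus $q$ in mind'') was warranted, because the claim as you formulated it is false when $4 \leq n \leq q$, so the gap cannot be closed in that form. Take $g \in \GF(q)[x_1, \dots, x_n]$ with $4 \leq n \leq q$ to be the function taking value $1$ on tuples with pairwise distinct entries and $0$ elsewhere. Then $g = g^* \circ \oddsupp$ with $g^*(S) = 1$ iff $\card{S} = n$, so $g$ is determined by $\oddsupp$ and hence, by Proposition~\ref{prop oddsuppchar2}, satisfies \eqref{cond symmetric} and \eqref{cond coeff}; it is non-constant, and $f := g = g + 0$ is a decomposition of type~\eqref{item:char2gapgeq2:2}. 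Yet $f|_{F^n_{=}} \equiv 0$, so $\qa f = 0$ and, by Theorem~\ref{thm:gap}\eqref{thmitem1}, $\gap f = n \neq 2$. Thus non-constancy of $g$ alone cannot certify $\gap f = 2$; the condition that does work --- and the one the paper's own one-line proof actually invokes (``$\qa f = n$ if and only if $f|_{F^n_{=}}$ is not constant'') --- is non-constancy of $g|_{F^n_{=}}$, equivalently of $f|_{F^n_{=}}$. With \eqref{item:char2gapgeq2:2} read that way, your symmetry reduction finishes easily: for $n \geq 3$ any two tuples of $F^n_{=}$ are connected by single-coordinate changes within $F^n_{=}$, so a function on $F^n_{=}$ with no essential variables is constant; hence non-constancy of $g|_{F^n_{=}}$ gives $\ess g|_{F^n_{=}} = n$, and Theorem~\ref{thm:gap}\eqref{thmitem2} yields $\gap f = 2$. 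In short, what you ran into is not a missing trick but an imprecision in the statement itself: ``$g$ is not constant'' in \eqref{item:char2gapgeq2:2} is adequate only when $n > q$ (where $F^n_{=} = F^n$); in general it must refer to the restriction to $F^n_{=}$, and your proof should be completed on that corrected formulation.
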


\begin{proof}
Combine Theorem~\ref{thm:gap}, Theorem~\ref{thm:sum}, Lemma~\ref{lemma minors}, Lemma~\ref{lemma Delta}, Proposition~\ref{prop oddsuppchar2}, and Proposition~\ref{prop diagoddsuppchar2}, and observe that if $f|_{F^n_{=}}$ is determined by $\oddsupp$ then $\qa f = n$ if and only if $f|_{F^n_{=}}$ is not constant.
\end{proof}

\begin{corollary}
\label{cor decomp char2}
Let $F = \GF(q)$, where $q$ is a power of\/ $2$, and let $f \colon F^n \to F$ be a polynomial function of essential arity $n > \max(q, 3)$. If\/ $\gap f = 2$, then $f$ can be decomposed into a sum of functions of essential arity at most $q - 1$.
\end{corollary}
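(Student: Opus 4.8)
The plan is to reduce to the characterization of polynomials determined by $\oddsupp$ and then exploit the reduced-polynomial convention (every exponent at most $q-1$) through an elementary counting argument. First I would observe that the hypothesis $n > \max(q,3)$ forces $F^n_{=} = F^n$, since $n > q = \card{F}$ means that every $n$-tuple over $F$ has a repeated coordinate by the pigeonhole principle. Consequently $\qa f = \ess f = n$, and since $n \neq 3$, part~\eqref{thmitem2} of Theorem~\ref{thm:gap} applies: as $\qa f = n \neq n-2$, the equality $\gap f = 2$ can hold only through the second alternative, namely that $f|_{F^n_{=}} = f$ is determined by $\oddsupp$. (Alternatively, one may invoke Willard's result quoted in Section~\ref{sect prel}, which directly yields that $f$ is determined by $\oddsupp$ under exactly the arity and gap hypotheses assumed here.)

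Next I would apply Proposition~\ref{prop oddsuppchar2} to conclude that, in its reduced form, $f$ satisfies conditions~\eqref{cond symmetric} and~\eqref{cond coeff}. The crux of the argument is condition~\eqref{cond coeff}: in any monomial $c\, x_1^{k_1} \cdots x_n^{k_n}$ occurring in $f$ with $c \neq 0$, no two of the nonzero exponents may coincide. Since in the reduced form every exponent lies in $\{0, 1, \dots, q-1\}$, the nonzero exponents of such a monomial are pairwise distinct elements of the $(q-1)$-element set $\{1, \dots, q-1\}$. Hence at most $q-1$ of the exponents $k_1, \dots, k_n$ are nonzero; that is, each monomial of $f$ involves at most $q-1$ variables.

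Finally, writing $f$ as the sum of its monomials (or, grouping by support, as $f = \sum_{S} f_S$, where $f_S$ collects the monomials whose set of occurring variables is exactly $S$), each summand depends on at most $q-1$ variables and is therefore an essentially at most $(q-1)$-ary function. This yields the desired decomposition.

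I expect the only point requiring care to be the interplay between the polynomial and the function viewpoints: the counting step is valid precisely because we work with the reduced representatives in which exponents are bounded by $q-1$, so one must ensure that Proposition~\ref{prop oddsuppchar2} is invoked for this reduced form. Once that is pinned down, the remainder is a routine pigeonhole count, and I anticipate no further obstacle.
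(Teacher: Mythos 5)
Your proof is correct and takes essentially the same route as the paper's: the paper cites Theorem~\ref{thm:char2gapgeq2} (ruling out case~\eqref{item:char2gapgeq2:1} since $n > q$ forces $F^n_{=} = F^n$, and noting $h \equiv 0$ in case~\eqref{item:char2gapgeq2:2}, so that $f = g$ satisfies conditions~\eqref{cond symmetric} and~\eqref{cond coeff}), which is precisely the packaged form of the reduction you carry out by hand via case~\eqref{thmitem2} of Theorem~\ref{thm:gap} (or Willard's result) together with Proposition~\ref{prop oddsuppchar2}. The concluding step --- condition~\eqref{cond coeff} plus the reduced-exponent convention forces each monomial to involve at most $q-1$ variables, yielding the decomposition into monomials --- is identical to the paper's.
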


\begin{proof}
If $n > q$, then $F^n_{=} = F^n$; hence case~\eqref{item:char2gapgeq2:1} in Theorem~\ref{thm:char2gapgeq2} cannot occur, while in case~\eqref{item:char2gapgeq2:2} we have $\overline{Q} \equiv 0$; thus $f = \overline{P}$. Moreover, in case~\eqref{item:char2gapgeq2:2}, every monomial of $P$ involves at most $q - 1$ variables, by condition \textnormal{(B${}_2$)} of Proposition~\ref{prop oddsuppchar2}.
This implies that $f$ can be written as a sum of
functions of essential arity at most $q - 1$,
namely the polynomial functions corresponding to the monomials of $f$.
\end{proof}

\begin{remark}
The decomposition given in Theorem~\ref{thm:char2gapgeq2} is unique only in case~\eqref{item:char2gapgeq2:1}. In case~\eqref{item:char2gapgeq2:2} it can be made unique by requiring that $\overline{P}$ is constant $0$ on $F^n \setminus F^n_{=}$.
\end{remark}

\begin{remark}
Applying Corollary~\ref{cor decomp char2} in the case $q = 2$, we see that any function $f \colon \{0, 1\}^n \to \{0, 1\}$ with essential arity $n \geq 4$ and $\gap f = 2$ can be written as a sum of at most unary functions, i.e., that $f$ is a linear function (cf.\ Example~\ref{ex:linearBf} and \cite{CL2007}).
\end{remark}

\begin{remark}
From the results of \cite{GapA} it follows that if $A$ is a finite set and $B$ is a Boolean group, then every function $f \colon A^n \to B$ with essential arity $n > \max(\card{A}, 3)$ and $\gap f = 2$ can be decomposed into a sum of functions of essential arity at most $n - 2$ (cf.\ Remark~\ref{rem:GapA}).
Corollary~\ref{cor decomp char2} shows that the bound $n - 2$ on the essential arity of the summands can be improved to $q - 1$ (which is independent of $n$) if $A = B = \GF(q)$, where $q$ is a power of $2$ (for further results in this direction see also~\cite{oddsupp}). In the example below, we will construct a polynomial function $f \colon F^n \to F$ over $F = \GF(q)$ for any odd prime power $q$ and any $n \geq 2$, such that $\gap f = 2$ but $f$ cannot be written as a sum of $(n - 1)$-ary functions. This shows that Corollary~\ref{cor decomp char2} does not hold for finite fields with odd characteristic and that the condition of $B$'s being a Boolean group cannot be dropped in the aforementioned result of \cite{GapA}.
\end{remark}

\begin{example}
\label{ex odd}
Let $q$ be an odd prime power, and let $f$ be the polynomial function
\begin{equation}
\label{eq:expansion}
f(x_1, \dots, x_n) = \prod_{i = 1}^n \Bigl( x_i^{q - 1} - \frac{1}{2} \Bigr)
\end{equation}
over $\GF(q)$, where $\frac{1}{2}$ stands for the multiplicative inverse of $2 = 1 + 1$ (it exists, since $\GF(q)$ is of odd characteristic). Let us identify the first two variables of $f$:
\begin{align*}
f(x_1, x_1, x_3, \dots, x_n)
&= \Bigl( x_1^{q - 1} - \frac{1}{2} \Bigr)^2 \cdot \prod_{i = 3}^n \Bigl( x_i^{q - 1} - \frac{1}{2} \Bigr) \\
&= \Bigl( x_1^{2q - 2} - x_1^{q - 1} + \frac{1}{4} \Bigr) \cdot \prod_{i = 3}^n \Bigl( x_i^{q - 1} - \frac{1}{2} \Bigr) \\
&= \frac{1}{4} \cdot \prod_{i = 3}^n \Bigl( x_i^{q - 1} - \frac{1}{2} \Bigr),
\end{align*}
since $x_1^q = x_1$ holds identically in $\GF(q)$. We see that $x_1$ becomes an inessential variable, and $\ess \minor{f}{2}{1} = n - 2$. This together with the total symmetry of $f$ shows that $\gap f = 2$.

Suppose that $f$ is a sum of functions of arity at most $n - 1$.
By Fact~\ref{fact:field}, these functions are polynomial.
This implies that every monomial of $P_f$ involves at most $n - 1$ variables. However, this is clearly not possible, as the expansion of the right side of \eqref{eq:expansion} is a canonical polynomial that involves the monomial $x_1^{q - 1} \cdots x_n^{q - 1}$, which will not be cancelled by any other monomial. This contradiction shows that $f$ cannot be expressed as a sum of functions of arity at most $n - 1$.
\end{example}


\section{Arity gap of polynomial functions over fields of characteristic $0$}
\label{sect char0}

We now consider the case of polynomial functions over fields of characteristic $0$. Unlike polynomial functions over fields of characteristic $2$ (see Proposition~\ref{prop diagoddsuppchar2}), it turns out that in the current case there is no polynomial function $f \colon F^n \to F$ whose restriction $f|_{F^n_{=}}$ is nonconstant and determined by $\oddsupp$.

We first recall the notion of partial derivative in the case of polynomial functions.
We denote the \emph{partial derivative} of a polynomial $p \in F[x_1, \dots, x_n]$ with respect to its $i$-th variable by $\partial_i p$, and we define it by the following rules. The $i$-th partial derivative of a monomial is defined by the rule
\begin{equation}
\partial_i c x_1^{a_1} \cdots x_n^{a_n} =
\begin{cases}
c a_i x_1^{a_1} \cdots x_{i-1}^{a_{i-1}} x_i^{a_i - 1} x_{i+1}^{a_{i+1}} \cdots x_n^{a_n}, & \text{if $a_i \neq 0$,} \\
0, & \text{otherwise.} \\
\end{cases}
\label{eq:dermon}
\end{equation}
Moreover, partial derivatives are additive, i.e.,
\begin{equation}
\partial_i \sum_{j \in J} f_j = \sum_{j \in J} \partial_i f_j.
\label{eq:deradd}
\end{equation}
The partial derivatives of arbitrary polynomials can then be determined by application of \eqref{eq:dermon} and \eqref{eq:deradd}.
The partial derivative of a polynomial function $f \colon F^n \to F$ with respect to its $i$-th variable is denoted by $\partial_i f$, and it is given by $\partial_i f := \overline{\partial_i P_f}$.

Observe that for fields of characteristic $0$, $\partial_i f = 0$ if and only if the $i$-th variable is inessential in $f$.
Also, let us note the difference between
\[
\partial_1 f(x_1, x_1, x_2) = \partial_1 (f(x_1, x_1, x_2))
\quad \text{and} \quad
(\partial_1 f)(x_1, x_1, x_2),
\]
where $f \colon F^3 \to F$ is a polynomial function.
The first one is a partial derivative of an identification minor of $f$, while the second one is an identification minor of a partial derivative of $f$. The chain rule gives the following relationship between these polynomials functions:
\[
\partial_1 f(x_1, x_1, x_2) = (\partial_1 f)(x_1, x_1, x_2) + (\partial_2 f)(x_1, x_1, x_2).
\]
Since we will often consider derivatives of simple minors, it is worth formulating a generalization of the above formula.

\begin{fact}
\label{fact derminor}
Let $F$ be a field of characteristic $0$, let $f \colon F^n \to F$ be a polynomial function, let $\sigma \colon [n] \to [m]$, and let $g \in F^m \to F$ be the simple minor of $f$ defined by $g(x_1, \dots, x_m) = f(x_{\sigma(1)}, \dots, x_{\sigma(n)})$. Then the $j$-th partial derivative of $g$ is
\[
\partial_j g = \sum_{\sigma(i) = j} (\partial_i f) (x_{\sigma(1)}, \dots, x_{\sigma(n)}).
\]
\end{fact}

\begin{lemma}
\label{lemma poly oddsupp}
Let $F$ be a field of characteristic\/ $0$ and let $f \colon F^n \to F$ be a polynomial function of arity at least\/ $2$. If $f|_{F^n_{=}}$ is determined by\/ $\oddsupp$, then $f|_{F^n_{=}}$ is constant, i.e., $\qa f = 0$.
\end{lemma}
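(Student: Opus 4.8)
The plan is to exploit two features of the hypothesis: that $\oddsupp$ is blind to coordinates occurring in pairs, and that over an infinite field (characteristic $0$) a functional identity is a polynomial identity. Write $f|_{F^n_{=}} = f^{\ast}\circ\oddsupp$. Every tuple in $F^n_{=}$ has a repeated pair, so the values of $\oddsupp$ realized on $F^n_{=}$ are exactly the sets $S\subseteq F$ with $\card{S}\le n-2$ and $\card{S}\equiv n\pmod 2$, and the assertion $\qa f=0$ is precisely that $f^{\ast}$ takes a single value on all such $S$. I would first record a derivative identity: for $i\ne j$, the substitution $x_i=x_j$ makes $f$ independent of the common value (the two equal entries cancel in $\oddsupp$), and since $F$ is infinite this is a polynomial identity, so differentiating via the chain rule (Fact~\ref{fact derminor}) gives $(\partial_i f+\partial_j f)|_{x_i=x_j}\equiv 0$.

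Next I would evaluate this at a \emph{block} configuration: set an odd number $b=2r+1\ge 3$ of the variables equal to one indeterminate $t$ and fix the rest to constants. For every pair $\{i,j\}$ inside the block the identity yields $d_i+d_j=0$, where $d_i:=(\partial_i f)(\mathrm{config})$; since $b\ge 3$ and $\operatorname{char}F=0$ this forces every $d_i=0$, whence $\tfrac{d}{dt}f(\mathrm{config})=\sum_{i\in\mathrm{block}}d_i=0$ and $f(\mathrm{config})$ is constant in $t$. The bridging step reads this through $\oddsupp$: given a realized set $S$ with $\card{S}=n-2$, pick $t_0\in S$ and run the configuration with $b=3$, the block set to $t$ and the remaining $n-3$ variables set to the distinct elements of $S\setminus\{t_0\}$. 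For generic $t$ the configuration has $\oddsupp=\{t\}\cup(S\setminus\{t_0\})$ of size $n-2$, equal to $S$ at $t=t_0$; setting $t=c$ for some $c\in S\setminus\{t_0\}$ makes $c$ occur four times, so it drops out and $\oddsupp=S\setminus\{t_0,c\}$ has size $n-4$. Constancy in $t$ then gives $f^{\ast}(S)=f^{\ast}(S\setminus\{t_0,c\})$.

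Finally I would close by induction on $n$. For $n=2$ every diagonal tuple has $\oddsupp=\emptyset$, so $f|_{F^2_{=}}=f^{\ast}(\emptyset)$ is constant; for $n=3$ the full-diagonal case ($b=3$) of the derivative step shows $f(t,t,t)=f^{\ast}(\{t\})$ is constant in $t$, which is the only realized level. For $n\ge 4$, the polynomial $g(x_3,\dots,x_n):=f(x_1,x_1,x_3,\dots,x_n)$ is independent of $x_1$ (the case $i=1$, $j=2$) and satisfies $g=f^{\ast}\circ\oddsupp$ on all of $F^{n-2}$; by the induction hypothesis applied to $g$ (a polynomial of arity $n-2\ge 2$), its restriction to $F^{n-2}_{=}$ is constant, i.e.\ $f^{\ast}$ is constant, say equal to $\kappa$, on all realized sizes $\le n-4$. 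The bridging step equates each value $f^{\ast}(S)$ at the top size $n-2$ with a value $f^{\ast}(S\setminus\{t_0,c\})$ at size $n-4$, which is $\kappa$; hence $f^{\ast}\equiv\kappa$ on every realized level and $f|_{F^n_{=}}$ is constant, that is, $\qa f=0$.

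The step I expect to be most delicate is the derivative core: justifying that $(\partial_i f+\partial_j f)|_{x_i=x_j}\equiv 0$ holds as a genuine polynomial identity, and that the block evaluation forces \emph{all} the diagonal partials to vanish simultaneously. This is exactly where characteristic $0$ is essential, both to divide by $2$ and (through $\partial_i$) to detect (in)essentiality. The remaining work—tracking the parities of the multiplicities so that specializing $t$ to a tail constant removes precisely one element from $\oddsupp$ and lands in the intended size level—is routine but must be carried out carefully.
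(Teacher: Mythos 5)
Your proof is correct and rests on exactly the same core as the paper's: the chain-rule identity $(\partial_i f+\partial_j f)|_{x_i=x_j}\equiv 0$ (a genuine polynomial identity since $F$ is infinite, via Fact~\ref{fact derminor}), evaluated at configurations with three equal coordinates, where characteristic $\neq 2$ forces the relevant partial derivatives to vanish and hence gives invariance of $f$ under changing the common value of a block of three equal entries. The only divergence is the endgame: the paper combines this triple-replacement property with pair-replacement in an explicit chain that transforms any tuple of $F^n_{=}$ into $(0,\dots,0)$, whereas you package the same two moves as an induction on $n$ (via the restriction $x_1=x_2$, which drops the arity by $2$) plus a bridging step identifying the values of $f^{\ast}$ on $\oddsupp$-levels of sizes $n-2$ and $n-4$; both versions are valid, so this is the same approach with different bookkeeping.
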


\begin{proof}
For $n = 2$, the claim is trivial, so we will assume that $n \geq 3$. Let us suppose that $f|_{F^n_{=}}$ is determined by $\oddsupp$. Then $f(x_1, x_1, x_3, \dots, x_n)$ does not depend on $x_1$ by Fact~\ref{fact oddsupp}; hence we have
\[
(\partial_1 f)(x_1, x_1, x_3, \dots, x_n) + (\partial_2 f)(x_1, x_1, x_3, \dots, x_n) = 0
\]
by Fact~\ref{fact derminor}. Let $\vect{u} = (x_1, x_1, x_1, x_4, \dots, x_n) \in F^n$. From the above equality it follows
that
\[
(\partial_1 f)(\vect{u}) + (\partial_2 f)(\vect{u}) = 0,
\]
and a similar argument shows that
\[
(\partial_1 f)(\vect{u}) + (\partial_3 f)(\vect{u}) = 0
\quad \text{and} \quad
(\partial_2 f)(\vect{u}) + (\partial_3 f)(\vect{u}) = 0.
\]
Since the characteristic of $F$ is different from $2$, by adding these three equalities we can conclude that
\[
(\partial_1 f)(\vect{u}) + (\partial_2 f)(\vect{u}) + (\partial_3 f)(\vect{u}) = 0.
\]
However, according to Fact~\ref{fact derminor}, $(\partial_1 f)(\vect{u}) + (\partial_2 f)(\vect{u}) + (\partial_3 f)(\vect{u})$ is nothing else but the derivative of $f(x_1, x_1, x_1, x_4, \dots, x_n)$ with respect to $x_1$. This implies that $f(x_1, x_1, x_1, x_4, \dots, x_n)$ does not depend on $x_1$, i.e.,
\begin{equation}
f(a, a, a, x_4, \dots, x_n) = f(b, b, b, x_4, \dots, x_n)
\label{eq triple}
\end{equation}
for any $a, b, x_4, \dots, x_n \in F$.

Informally, equality~\eqref{eq triple} expresses the fact that whenever the first three entries of an $n$-tuple are the same, then replacing these three entries with another element of $F$, the value of $f$ does not change. (By symmetry, this is certainly true for any three entries, not only the first three.) From the definition of being determined by $\oddsupp$ it follows immediately that we can also change any two identical entries:
\begin{equation}
f( \cdots a \cdots a \cdots ) = f( \cdots b \cdots b \cdots ).
\label{eq double}
\end{equation}

Let $\vect{x} = (x_1, \dots, x_n)$ be any vector in $F^n_{=}$. We may suppose without loss of generality that $x_1 = x_2$. With the help of \eqref{eq triple} and \eqref{eq double} we can replace the entries of $\vect{x}$ in triples and pairs, until all of them are the same:
\begin{align*}
f(\vect{x})
&= f(\underline{x_1, x_1}, x_3, x_4, x_5, x_6, \dots, x_n) \\
&= f(\underline{x_3, x_3, x_3}, x_4, x_5, x_6, \dots, x_n) \\
&= f(\underline{x_4, x_4}, \underline{x_4, x_4}, x_5, x_6, \dots, x_n) \\
&= f(\underline{x_5, x_5, x_5}, \underline{x_5, x_5}, x_6, \dots, x_n) \\
&= f(\underline{x_6, x_6}, \underline{x_6, x_6}, \underline{x_6, x_6}, \dots, x_n)
 = \cdots \\
&= f(x_n, x_n, x_n, x_n, x_n, x_n, \dots, x_n).
\end{align*}
If $n$ is even, then \eqref{eq double} shows that $f(\vect{x}) = f(\vect{0})$:
\[
f(\vect{x}) = f(\underline{x_n, x_n}, \underline{x_n, x_n}, \dots, \underline{x_n, x_n}) = f(0, 0, 0, 0, \dots, 0, 0);
\]
while if $n$ is odd, then we use both \eqref{eq triple} and \eqref{eq double}:
\[
f(\vect{x}) = f(\underline{x_n, x_n, x_n}, \underline{x_n, x_n}, \dots, \underline{x_n, x_n}) = f(0, 0, 0, 0, 0, \dots, 0, 0).
\]
We have shown that $f(\vect{x}) = f(\vect{0})$ for all $\vect{x} \in F^n_{=}$; hence $f|_{F^n_{=}}$ is indeed constant.
\end{proof}

\begin{lemma}
\label{lemma poly majmin}
Let $F$ be a field of characteristic\/ $0$ and let $f \colon F^3 \to F$ be a polynomial function. If\/ $\gap f = 2$, then\/ $\qa f = 1$.
\end{lemma}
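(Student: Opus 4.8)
The plan is to combine the $n = 3$ clause of Theorem~\ref{thm:gap} with a partial-derivative computation in the spirit of Lemma~\ref{lemma poly oddsupp}, the hypothesis that $F$ is of characteristic $0$ entering exactly where it forbids ``majority/minority-type'' behaviour. By the standing convention I may assume $\ess f = 3$, so that Theorem~\ref{thm:gap}\eqref{thmitem3} applies. First I would record the lower bound $\qa f \geq 1$: by Theorem~\ref{thm:gap}\eqref{thmitem1} (with $p = n = 3$) we have $\gap f = 3$ if and only if $\qa f = 0$, and since $\gap f = 2$ this gives $\qa f \neq 0$. Theorem~\ref{thm:gap}\eqref{thmitem3} then provides a nonconstant unary $h$ and $i_1, i_2, i_3 \in \{0,1\}$ with
\begin{align*}
f(x_1, x_0, x_0) &= h(x_{i_1}), \\
f(x_0, x_1, x_0) &= h(x_{i_2}), \\
f(x_0, x_0, x_1) &= h(x_{i_3}).
\end{align*}
Each left-hand side is a simple minor of $f$ and hence a polynomial function, so $h$ is a polynomial; being nonconstant over a field of characteristic $0$, its derivative $h'$ is a nonzero polynomial.

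The crux is to show that exactly one of $i_1, i_2, i_3$ equals $1$. For this I would differentiate each of the three identities above with respect to the two variables $x_0$ and $x_1$, using Fact~\ref{fact derminor}, and evaluate along the diagonal $x_0 = x_1 = a$. Write $d_k := (\partial_k f)(a, a, a)$ and $H := h'(a)$, and note that in the $\ell$-th identity the variable $x_1$ occurs once (in position $\ell$) while $x_0$ occurs twice; I treat $i_\ell \in \{0,1\} \subseteq F$ as a coefficient, so the indicator of $i_\ell = 1$ is $i_\ell$ and that of $i_\ell = 0$ is $1 - i_\ell$. Differentiating the $\ell$-th identity with respect to $x_1$ then gives $d_\ell = i_\ell H$, while differentiating with respect to $x_0$ gives $\sum_{k \neq \ell} d_k = (1 - i_\ell) H$. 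Summing the first relation over $\ell$ yields $d_1 + d_2 + d_3 = S H$, where $S := i_1 + i_2 + i_3$, and summing the second over $\ell$ yields $2(d_1 + d_2 + d_3) = (3 - S) H$; hence $2 S H = (3 - S) H$ in the integral domain $F[a]$. Cancelling the nonzero factor $h'(a)$ leaves $2 S = 3 - S$, i.e.\ $S = 1$. This is precisely where characteristic $0$ is indispensable: it makes $h'$ nonzero and legitimises the cancellation, and it is what rules out, for instance, the majority ($S = 0$) and minority ($S = 3$) patterns, which simply are not polynomial functions in characteristic $0$.

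It then remains to exhibit a unary support. With $S = 1$, say $i_k = 1$ and $i_\ell = 0$ for $\ell \neq k$, I would check that the unary polynomial $g := h(x_k)$ agrees with $f$ on $F^3_{=}$. Since the three slices described above cover $F^3_{=}$ (agreeing on the all-equal overlaps), it suffices to compare them slice by slice: on the $k$-th slice the $k$-th coordinate carries the once-occurring value, so $g = h(x_1) = f$ because $i_k = 1$; on the $\ell$-th slice with $\ell \neq k$ the $k$-th coordinate carries the repeated value, so $g = h(x_0) = f$ because $i_\ell = 0$. Thus $g$ is a support of $f$ with $\ess g = 1$, giving $\qa f \leq 1$, and together with $\qa f \geq 1$ we conclude $\qa f = 1$. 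The main obstacle I anticipate is not conceptual but bookkeeping: keeping the once-occurring and repeated variables straight across the three slices when applying the chain rule and when verifying the support. All the mathematical content sits in the single diagonal identity $2S = 3 - S$.
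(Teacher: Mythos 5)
Your proof is correct, and it takes a recognizably different (and in fact more streamlined) route than the paper's. Both arguments start from Theorem~\ref{thm:gap}\eqref{thmitem3} and both exploit diagonal evaluations of partial derivatives via Fact~\ref{fact derminor}, but the paper proceeds by a four-way case analysis on $(i_1,i_2,i_3)$ up to permutation: the case $(1,1,1)$ is disposed of by invoking Lemma~\ref{lemma poly oddsupp} (the $\oddsupp$ lemma), the cases $(0,0,0)$ and $(1,1,0)$ by separate derivative computations, each ending in the contradiction that $h$ is constant, and only the surviving case $(1,0,0)$ yields $\qa f = 1$. You instead encode all cases in one identity: with $S = i_1+i_2+i_3$ and $H = h'$, your two families of diagonal relations sum to $2SH = (3-S)H$, and cancelling the nonzero polynomial $H$ (this is exactly where characteristic $0$ enters, just as in the paper) forces $S = 1$, i.e.\ the pattern $(1,0,0)$ up to permutation. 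This single computation replaces the paper's case analysis and, notably, makes the appeal to Lemma~\ref{lemma poly oddsupp} unnecessary for this particular lemma (the paper still needs that lemma for the $n \geq 4$ part of its main theorem, so the overall dependency structure of the paper is unchanged). You are also more explicit than the paper at two points, both correctly handled: the lower bound $\qa f \geq 1$ via Theorem~\ref{thm:gap}\eqref{thmitem1} (it also follows simply because $h$ is nonconstant, hence $f|_{F^3_{=}}$ is nonconstant), and the slice-by-slice verification that $h(x_k)$ is a unary support, where the paper merely states $f|_{F^3_{=}} = h(x_1)|_{F^3_{=}}$. One implicit step you share with the paper: passing from the functional identities of Theorem~\ref{thm:gap}\eqref{thmitem3} to polynomial identities that can be formally differentiated uses that a field of characteristic $0$ is infinite.
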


\begin{proof}
By case~\eqref{thmitem3} of Theorem~\ref{thm:gap}, there exist a nonconstant map $h \colon A \to B$ and $i_1, i_2, i_3 \in \{0,1\}$ such that
\begin{align*}
f(x_1, x_0, x_0) &= h(x_{i_1}), \\
f(x_0, x_1, x_0) &= h(x_{i_2}), \\
f(x_0, x_0, x_1) &= h(x_{i_3}).
\end{align*}
Up to permutation of variables there are four possibilities for $(i_1, i_2, i_3)$, namely $(1, 1, 1)$, $(0, 0, 0)$, $(1, 1, 0)$ and $(1, 0, 0)$. We will show that the first three cases cannot occur.

If $(i_1, i_2, i_3) = (1, 1, 1)$ then $f|_{F^3_{=}}$ is determined by $\oddsupp$, and then Lemma~\ref{lemma poly oddsupp} shows that $h$ is constant, a contradiction.

If $(i_1, i_2, i_3) = (0, 0, 0)$ then $f(x_2, x_1, x_1) = f(x_1, x_2, x_1) = f(x_1, x_1, x_2) = h(x_1)$; hence $f(x_2, x_1, x_1)$ does not depend on $x_2$. By Fact~\ref{fact derminor} this means that $(\partial_1 f)(x_2, x_1, x_1) = 0$, in particular, $(\partial_1 f)(x_1, x_1, x_1) = 0$ for all $x_1 \in F$. Similarly, we have $(\partial_2 f)(x_1, x_1, x_1) = (\partial_3 f)(x_1, x_1, x_1) = 0$. Another application of Fact~\ref{fact derminor} yields
\begin{multline*}
\partial_1 h(x_1)
= \partial_1 f(x_1, x_1, x_1) \\
= (\partial_1 f)(x_1, x_1, x_1) + (\partial_2 f)(x_1, x_1, x_1) + (\partial_3 f)(x_1, x_1, x_1)
= 0,
\end{multline*}
and this means that $h$ is constant, a contradiction.

If $(i_1, i_2, i_3) = (1, 1, 0)$, then $f(x_1, x_2, x_2) = f(x_2, x_1, x_2) = f(x_1, x_1, x_2) = h(x_1)$, which does not depend on $x_2$. Again, by Fact~\ref{fact derminor} we see that
\begin{align*}
(\partial_2 f)(x_1, x_2, x_2) + (\partial_3 f)(x_1, x_2, x_2) &= 0, \\
(\partial_1 f)(x_2, x_1, x_2) + (\partial_3 f)(x_2, x_1, x_2) &= 0, \\
(\partial_3 f)(x_1, x_1, x_2) &= 0.
\end{align*}
From these equalities it follows that
\[
(\partial_1 f)(x_1, x_1, x_1) = (\partial_2 f)(x_1, x_1, x_1) = (\partial_3 f)(x_1, x_1, x_1) = 0,
\]
which is again a contradiction.

We are left with the case that $(i_1, i_2, i_3) = (1, 0, 0)$ (up to permutation). This implies that $f|_{F^3_{=}} = h(x_1)|_{F^3_{=}}$, i.e., $\qa f = 1$.
\end{proof}

\begin{theorem}
Let $F$ be a field of characteristic\/ $0$, let $n \geq 2$, and let $P \in F[x_1, \dots, x_n]$ be a polynomial such that all $n$ variables occur in $P$. Then\/ $\gap \overline{P} = p \geq 2$ if and only if there exist polynomials $Q, R \in F[x_1, \dots, x_n]$ such that $P = Q + R$, exactly $n - p$ variables occur in $Q$, and $R$ is a nonzero multiple of the polynomial $\Delta_n$. Otherwise\/ $\gap \overline{P} = 1$. Moreover, the decomposition $P = Q + R$ is unique.
\end{theorem}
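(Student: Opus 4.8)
The plan is to prove the statement by first isolating a clean description of the quasi-arity in characteristic $0$, and then feeding it into the decomposition machinery of Theorem~\ref{thm:sum}, exactly as was done for the arity gap~$3$ case in Theorem~\ref{thm poly gap3}. The parameter $p$ is fixed throughout with $2 \leq p \leq n$, and the key fact I would establish is that, over a field of characteristic $0$, a polynomial $f$ depending on all of its $n \geq 2$ variables satisfies $\gap f = p$ if and only if $\qa f = n - p$. Granting this, the equivalence with the decomposition follows: from $\ess f = n$ and $\qa f = n - p$, Theorem~\ref{thm:sum} (applicable since $n \geq 3$) produces $g, h$ with $f = g + h$, $\ess g = n - p$, $h|_{F^n_{=}} \equiv 0$ and $h \neq 0$. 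Because $p \geq 2$, the polynomial $g$ has at least two inessential variables, and the computation in Lemma~\ref{lemma minors} (which uses only two such variables) gives $\minor{f}{i}{j} = g$, so $g$ and $h = f - g$ are polynomials; Lemma~\ref{lemma Delta} then says precisely that $h$ is a nonzero multiple of $\Delta$. Over an infinite field a variable occurs in a polynomial if and only if it is essential, so "$\ess g = n - p$" reads as "exactly $n - p$ variables appear in $g$", and the uniqueness is inherited from Theorem~\ref{thm:sum} since polynomials and polynomial functions coincide over infinite fields. The converse direction simply reverses this chain: a decomposition of the stated form yields $h|_{F^n_{=}} \equiv 0$, $h \neq 0$ and $\ess g = n - p$, whence Theorem~\ref{thm:sum} returns $\ess f = n$ and $\qa f = n - p$, and the key fact gives $\gap f = p$.

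With this reduction in place, I would prove the key fact by splitting on $p$. For $p \geq 3$ there is nothing to do, since Theorem~\ref{thm:gap}(\ref{thmitem1}) states directly that $\gap f = p \iff \qa f = n - p$. The base arity $n = 2$ must be checked by hand, as Theorem~\ref{thm:sum} requires $n \geq 3$: there $p = 2$ is the only admissible value, $\gap f = 2$ means that $\minor{f}{1}{2}$, i.e.\ $f(x_2, x_2)$, is constant, which is exactly $\qa f = 0 = n - 2$, and the (unique) decomposition is $g = f(x_2, x_2)$, a constant, together with $h = f - g$, a nonzero multiple of $\Delta = x_1 - x_2$.

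The crux is $p = 2$ with $n \geq 3$, which is where characteristic $0$ genuinely intervenes. For $n \geq 4$, Theorem~\ref{thm:gap}(\ref{thmitem2}) gives $\gap f = 2$ if and only if $\qa f = n - 2$, or else $\qa f = n$ with $f|_{F^n_{=}}$ determined by $\oddsupp$; the whole point is that this second alternative is vacuous in characteristic $0$, because Lemma~\ref{lemma poly oddsupp} forces $\qa f = 0$, contradicting $\qa f = n \geq 2$. I expect the main obstacle to be the remaining case $n = 3$, $p = 2$, since it is governed by clause~(\ref{thmitem3}) rather than by the quasi-arity. For the forward direction Lemma~\ref{lemma poly majmin} gives at once $\gap f = 2 \Rightarrow \qa f = 1 = n - 2$. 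For the converse I would argue directly: assuming $\qa f = 1$, fix an essentially unary support $g(x_m)$ of $f$ with $f|_{F^3_{=}} = g|_{F^3_{=}}$; then for every $i \neq j$ the minor $\minor{f}{i}{j}$ evaluates $f$ only on tuples lying in $\{x_i = x_j\} \subseteq F^3_{=}$, so it coincides with $g$ read off a single coordinate and is therefore essentially at most unary. Hence $\ess \minor{f}{i}{j} \leq 1$ for all $i \neq j$, giving $\gap f \geq 2$; and since $\qa f = 1 \neq 0$, Theorem~\ref{thm:gap}(\ref{thmitem1}) rules out $\gap f = 3$, so $\gap f = 2$. Collecting the three ranges of $p$ establishes the key fact and, together with the reduction above, proves the theorem.
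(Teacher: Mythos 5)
Your proposal is correct and takes essentially the same route as the paper: both reduce the theorem to the equivalence $\gap f = p \Leftrightarrow \qa f = n-p$ in characteristic $0$ (via Theorem~\ref{thm:gap} together with Lemma~\ref{lemma poly oddsupp} and Lemma~\ref{lemma poly majmin}), and then feed this into Theorem~\ref{thm:sum} and Lemma~\ref{lemma Delta}. If anything, you are more explicit than the paper at a few points it glosses over---the base case $n=2$ (where Theorem~\ref{thm:sum} is not applicable), the polynomiality of $g$ and $h$ when $p=2$ (adapting the argument of Lemma~\ref{lemma minors}, which is stated only for $p\geq 3$), and the converse implication in the case $n=3$.
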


\begin{proof}
For necessity, assume that $\gap \overline{P} = p \geq 2$. By Lemma~\ref{lemma poly oddsupp}, if $\overline{P}|_{F^n_{=}}$ is determined by $\oddsupp$, then $\qa \overline{P} = 0$. Theorem~\ref{thm:gap} and Lemma~\ref{lemma poly majmin} then imply that if $\gap \overline{P} = p \geq 2$, then $\qa \overline{P} = n - p$. By Theorem~\ref{thm:sum}, there exist unique functions $g, h \colon A^n \to B$ such that $\overline{P} = g + h$, $h|_{F^n_{=}} \equiv 0$, $h \not\equiv 0$ and $\ess g = n - p$. By Lemma~\ref{lemma minors}, $g$ and $h$ are polynomial functions. Since $F$ is infinite, each one of $g$ and $h$ is induced by a unique polynomial over $F$, namely $P_g$ and $P_h$, respectively. Thus, $P = P_g + P_h$. By Fact~\ref{fact:essential}, exactly $n - p$ variables occur in $P_g$, and by Lemma~\ref{lemma Delta}, $P_h$ is a nonzero multiple of $\Delta(x_1, \dots, x_n)$.

For sufficiency, assume that $P = Q + R$, where $Q$ and $R$ are as in the statement of the theorem. Then $\ess \overline{Q} = n - p$ by Fact~\ref{fact:essential}, and $\overline{R} \not\equiv 0$ and $\overline{R}|_{F^n_{=}} \equiv 0$ by Lemma~\ref{lemma Delta}. From Theorem~\ref{thm:sum} it follows that $\qa \overline{P} = n - p$, and then Theorem~\ref{thm:gap} implies that $\gap \overline{P} = p$.

The uniqueness of the decomposition $P = Q + R$ follows from Theorem~\ref{thm:sum} and from the fact that polynomials and polynomial functions over infinite fields are in one-to-one correspondence.
\end{proof}

Let us note that in the proof of the above theorem we did not really make use of the fact that the function $\overline{P}$ is polynomial; we only used the basic properties of the derivative. Therefore the theorem remains valid for differentiable real functions.

\begin{theorem}
Let $f \colon \mathbb{R}^n \to \mathbb{R}$ be a differentiable function of arity at least\/ $2$. Then\/ $\gap f = p \geq 2$ if and only if there exist differentiable functions $g, h \colon \mathbb{R}^n \to \mathbb{R}$ such that $f = g + h$, $h|_{\mathbb{R}^n_{=}} \equiv 0$, $h \not\equiv 0$, and\/ $\ess g = n - p$. Otherwise\/ $\gap f = 1$. Moreover, the decomposition $f = g + h$ is unique.
\end{theorem}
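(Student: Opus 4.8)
The plan is to follow the proof of the preceding characteristic-$0$ polynomial theorem almost verbatim, since, as the remark preceding this statement observes, that argument never used polynomiality beyond the refinement supplied by Lemma~\ref{lemma Delta}. Here I would simply drop that refinement: instead of recording that the diagonal-vanishing summand is a multiple of $\Delta$, one keeps the intrinsic condition $h|_{\mathbb{R}^n_{=}} \equiv 0$ directly. Everything else rests on differential calculus, which is available for differentiable functions. Before starting, I would record the two analytic facts that make the transfer work: first, the chain-rule formula of Fact~\ref{fact derminor} holds for differentiable functions, being nothing but the ordinary multivariable chain rule applied to the affine substitution defining a simple minor; second, for a differentiable $f \colon \mathbb{R}^n \to \mathbb{R}$ one has $\partial_i f \equiv 0$ if and only if $x_i$ is inessential in $f$, the nontrivial implication following from the mean value theorem together with the connectedness of $\mathbb{R}^n$.

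With these in hand, the proofs of Lemma~\ref{lemma poly oddsupp} and Lemma~\ref{lemma poly majmin} go through unchanged, since both rely only on Fact~\ref{fact oddsupp}, Fact~\ref{fact derminor}, and the equivalence $\partial_i f \equiv 0 \Leftrightarrow x_i$ inessential. Thus I may use the differentiable analogues: if $f|_{\mathbb{R}^n_{=}}$ is determined by $\oddsupp$ then $\qa f = 0$, and, for $n = 3$, $\gap f = 2$ implies $\qa f = 1$. For necessity, assume $f$ depends on all its variables and $\gap f = p \geq 2$. Combining Theorem~\ref{thm:gap} with these two facts yields $\qa f = n - p$ in every case: for $p \geq 3$ this is immediate from part~\eqref{thmitem1}; for $p = 2$ and $n \neq 3$, part~\eqref{thmitem2} applies and its second alternative is excluded because the analogue of Lemma~\ref{lemma poly oddsupp} forces $\qa f = 0 \neq n$; and for $p = 2$, $n = 3$, the analogue of Lemma~\ref{lemma poly majmin} gives $\qa f = 1 = n - 2$. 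For $n \geq 3$, Theorem~\ref{thm:sum} then provides the unique decomposition $f = g + h$ with $h|_{\mathbb{R}^n_{=}} \equiv 0$, $h \not\equiv 0$, and $\ess g = n - p$.

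The one point needing real care, and the analogue of Lemma~\ref{lemma minors}, is that the functions $g$ and $h$ produced abstractly by Theorem~\ref{thm:sum} are differentiable. Since $\ess g = n - p$ and $p \geq 2$, the function $g$ has at least two inessential variables, say the $i$-th and the $j$-th; identifying them sends every tuple into $\mathbb{R}^n_{=}$, so $\minor{h}{i}{j} \equiv 0$ and hence $\minor{f}{i}{j} = \minor{g}{i}{j} = g$. Thus $g$ is a simple minor of the differentiable function $f$, so $g$ is differentiable, and so is $h = f - g$.

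For sufficiency, assume the decomposition is given. Theorem~\ref{thm:sum} returns $\ess f = n$ and $\qa f = n - p$, and Theorem~\ref{thm:gap} then gives $\gap f = p$ whenever $p \geq 3$ or $p = 2$ with $n \neq 3$. The case $n = 3$, $p = 2$ is the only genuine obstacle, because there Theorem~\ref{thm:gap} characterizes arity gap $2$ through part~\eqref{thmitem3} rather than through the quasi-arity; I would settle it by a direct computation of identification minors, using again that $\minor{h}{i}{j} \equiv 0$, so that $\minor{f}{i}{j} = \minor{g}{i}{j}$. Since $\ess g = 1$, each $\minor{g}{i}{j}$ still depends on exactly one variable, whence $\ess \minor{f}{i}{j} = 1$ for all $i \neq j$ and therefore $\gap f = 3 - 1 = 2$. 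Finally, since Theorem~\ref{thm:sum} requires $n \geq 3$, the boundary arity $n = 2$ (where necessarily $p = 2$) is dispatched directly in both directions: $\gap f = 2$ is equivalent to $f$ being constant on the diagonal, which is exactly the decomposition with $g$ equal to that constant and $h = f - g$. Uniqueness of $g$ and $h$ is inherited from Theorem~\ref{thm:sum}.
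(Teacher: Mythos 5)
Your proposal is correct and takes essentially the same route as the paper: the paper's own ``proof'' is just the remark that the characteristic-$0$ polynomial argument uses only basic properties of derivatives (the chain rule of Fact~\ref{fact derminor} and the equivalence $\partial_i f \equiv 0 \Leftrightarrow x_i$ inessential), which is exactly the transfer you carry out, with the $\Delta$-divisibility refinement of Lemma~\ref{lemma Delta} replaced by the intrinsic condition $h|_{\mathbb{R}^n_=} \equiv 0$. You in fact supply details the paper leaves implicit --- the differentiability of $g$ and $h$ via the simple-minor argument of Lemma~\ref{lemma minors}, and the boundary cases $n=2$ and $(n,p)=(3,2)$ in the sufficiency direction --- and all of these check out.
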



\section{Some remarks on polynomial functions over infinite fields of odd characteristic}
\label{sec:oddchar}

As the following example illustrates, Proposition~\ref{prop diagoddsuppchar2} and Lemma~\ref{lemma poly oddsupp} do not extend to infinite fields of odd characteristic.

\begin{example}
Let $F$ be an arbitrary field of characteristic $3$, and let $f \colon F^3 \to F$ be the polynomial function induced by
\begin{equation}
\label{eq:last}
2 x^3 + 2 y^3 + 2 z^3 + y z^2 - x y^2 - x z^2 + y^2 z + 2 xyz.
\end{equation}
It is straightforward to verify that
\[
f(x, x, y) = f(x, y, x) = f(y, x, x) = 2 y^3.
\]
Hence $f|_{F^3_{=}}$ is determined by $\oddsupp$ but $f|_{F^3_{=}}$ is not constant. This shows that Lemma~\ref{lemma poly oddsupp} does not hold if $F$ has characteristic $3$.

Next we show that Proposition~\ref{prop diagoddsuppchar2} does not hold for infinite fields of characteristic $3$. Assume now that $F$ is infinite, and let $f$ be induced by \eqref{eq:last}.
Suppose that $g \colon F^3 \to F$ is a polynomial function determined by $\oddsupp$ induced by the canonical polynomial
\[
\sum_{(k_1, k_2, k_3) \in \mathbb{N}^3} c_{(k_1, k_2, k_3)} x_1^{k_1} x_2^{k_2} x_3^{k_3}.
\]
Condition~\eqref{cond coeff} of Proposition~\ref{prop oddsuppchar2} yields the following equalities:
\begin{align*}
& c_{(3, 0, 0)} + c_{(2, 1, 0)} + c_{(1, 2, 0)} + c_{(0, 3, 0)} = 0, \\
& c_{(2, 0, 1)} + c_{(1, 1, 1)} + c_{(0, 2, 1)} = 0, \\
& c_{(1, 0, 2)} + c_{(0, 1, 2)} = 0.
\end{align*}
Taking into account the total symmetry of $g$ (condition \eqref{cond symmetric}) and the fact that the characteristic of $F$ is not $2$, the only solution to this system of equations is $c_{(k_1, k_2, k_3)} = 0$ for all $(k_1, k_2, k_3) \in \mathbb{N}^3$ such that $k_1 + k_2 + k_3 = 3$. Thus, the canonical polynomial of $g(x, x, x)$ does not contain any cubic term; therefore it cannot coincide with $f(x, x, x) = 2 x^3$, and we conclude that $f|_{F^3_{=}} \neq g|_{F^3_{=}}$.
\end{example}


\section*{Acknowledgments}

The first named author is supported by the internal research project F1R-MTH-PUL-12RDO2 of the University of Luxembourg.

The third named author acknowledges that the present project is supported by the Hungarian National Foundation for Scientific Research under grants no.\ K77409 and K83219, by the National Research Fund of Luxembourg, and cofunded under the Marie Curie Actions of the European Commission \hbox{(FP7-COFUND).}


\end{document}